\numberwithin{figure}{section}  
\numberwithin{equation}{section}
\numberwithin{table}{section}
\theoremstyle{definition}
\newtheorem{definition}{Definition}[section]
\newtheorem{remark}[definition]{Remark}
\newtheorem{example}[definition]{Example}
\theoremstyle{plain}
\newtheorem{lemma}[definition]{Lemma}
\newtheorem{theorem}[definition]{Theorem}
\newtheorem{proposition}[definition]{Proposition}
\newtheorem{conjecture}[definition]{Conjecture}
\newcommand{\bA}{\mathbb{A}}
\newcommand{\bC}{\mathbb{C}}
\newcommand{\bF}{\mathbb{F}}
\newcommand{\bG}{\mathbb{G}}
\newcommand{\bN}{\mathbb{N}}
\newcommand{\bP}{\mathbb{P}}
\newcommand{\bQ}{\mathbb{Q}}
\newcommand{\bR}{\mathbb{R}}
\newcommand{\bZ}{\mathbb{Z}}
\newcommand{\WP}{\mathbb{W}\mathbb{P}}
\newcommand{\calA}{\mathcal{A}}
\newcommand{\calC}{\mathcal{C}}
\newcommand{\calD}{\mathcal{D}}
\newcommand{\calE}{\mathcal{E}}
\newcommand{\calL}{\mathcal{L}}
\newcommand{\calM}{\mathcal{M}}
\newcommand{\calN}{\mathcal{N}}
\newcommand{\calO}{\mathcal{O}}
\newcommand{\calP}{\mathcal{P}}
\newcommand{\calV}{\mathcal{V}}
\newcommand{\calY}{\mathcal{Y}}
\DeclareMathOperator{\NS}{NS}
\DeclareMathOperator{\Pic}{Pic}
\DeclareMathOperator{\Hom}{Hom}
\DeclareMathOperator{\Spec}{Spec}
\DeclareMathOperator{\Div}{Div}
\DeclareMathOperator{\Gr}{Gr}
\DeclareMathOperator{\Nef}{Nef}
\DeclareMathOperator{\Perf}{Perf}
\DeclareMathOperator{\Fuk}{Fuk}
\begin{document}

\title[Mirror Symmetry for del Pezzo Surfaces]{Mirror Symmetry for Lattice Polarized del Pezzo Surfaces}

\author[C. F. Doran]{Charles F. Doran}
\address{Department of Mathematical and Statistical Sciences, 632 CAB, University of Alberta,  Edmonton, AB, T6G 2G1, Canada}
\email{charles.doran@ualberta.ca}
\thanks{C. F. Doran (University of Alberta) was supported by the Natural Sciences and Engineering Research Council of Canada, the Pacific Institute for the Mathematical Sciences, and the Visiting Campobassi Professorship at the University of Maryland.}

\author[A. Thompson]{Alan Thompson}
\address{Mathematics Institute, Zeeman Building, University of Warwick,  Coventry, CV4 7AL, United Kingdom \newline \indent
DPMMS, Centre for Mathematical Sciences, Wilberforce Road, Cambridge, CB3 0WB, United Kingdom }
\email{amt69@cam.ac.uk}
\thanks{A. Thompson (University of Warwick/University of Cambridge) was supported by the Engineering and Physical Sciences Research Council programme grant \emph{Classification, Computation, and Construction: New Methods in Geometry}.}

\date{}

\begin{abstract}
We describe a notion of lattice polarization for rational elliptic surfaces and weak del Pezzo surfaces, and describe the complex moduli of the former and the K\"{a}hler cone of the latter. We then propose a version of mirror symmetry relating these two objects, which should be thought of as a form of Fano-LG correspondence. Finally, we relate this notion to other forms of mirror symmetry, including Dolgachev-Nikulin-Pinkham mirror symmetry for lattice polarized K3 surfaces and the Gross-Siebert program.
\end{abstract}

\subjclass[2010]{14J33, 14J26, 14J45, 14J28}

\maketitle

\section{Introduction}

The aim of this paper is to describe a form of mirror symmetry relating weak del Pezzo surfaces to rational elliptic surfaces.

Weak del Pezzo surfaces are $2$-dimensional \emph{weak Fano varieties}, i.e. smooth varieties with nef and big anticanonical divisors. In physics, Eguchi, Hori, and Xiong \cite{gqc} postulated that such varieties are mirror to \emph{Landau-Ginzburg models}, which in the $2$-dimensional setting take the form of elliptic fibrations over $\bC$. This version of mirror symmetry is known as the \emph{Fano-LG correspondence}, and has been widely studied in the mathematical literature. In the surface case, such a study has been performed by Auroux, Katzarkov, and Orlov \cite{msdpsvccs}, who find that the Landau-Ginzburg model of a weak del Pezzo surface can be realized as an open set in a certain rational elliptic surface.

In \cite{mstdfcym}, Doran, Harder, and Thompson conjecture a relationship between the Fano-LG correspondence and classical mirror symmetry for Calabi-Yau varieties, based on earlier ideas of Tyurin \cite{fvcy}. Stated briefly, their conjecture claims that if one degenerates a Calabi-Yau variety to a union of two (quasi-)Fano varieties, glued along a smooth anticanonical divisor, then the mirror Calabi-Yau variety can be constructed by gluing together the Landau-Ginzburg models associated to the (quasi-)Fanos. This conjecture has been proved by Kanazawa in the cases of elliptic curves \cite{dhtcsyzmsec} and Abelian surfaces \cite{dtfgqms}, using SYZ mirror symmetry, but the conjecture remains open for K3 surfaces and higher dimensional Calabi-Yau varieties. Despite this, however, there have been some attempts to use the Doran-Harder-Thompson conjecture to construct new mirrors: Lee \cite{mpcytmpqft} has used a version of it  to construct new candidate mirror pairs of Calabi-Yau threefolds, using techniques from toric geometry, and Kanazawa \cite{dtfgqms} has discussed the possibility of using it to construct new Landau-Ginzburg models.

The motivation for this paper comes from thinking about the $2$-dimen\-sional version of the Doran-Harder-Thompson conjecture. Classical mirror symmetry for $2$-dimensional Calabi-Yau varieties is given by Dolgachev's, Nikulin's, and Pinkham's \cite{sedeask3,fagkk3s,iqfaag,mslpk3s} notion of mirror symmetry for lattice polarized K3 surfaces. This is discussed in \cite[Section 4]{mstdfcym}, where it is postulated that there should be a corresponding notion of lattice polarized mirror symmetry for quasi-Fano surfaces and their Landau-Ginzburg models, that is compatible under the Doran-Harder-Thompson conjecture with mirror symmetry for lattice polarized K3 surfaces.

This paper aims to put this idea on a more rigorous footing. We describe a notion of lattice polarization for weak del Pezzo surfaces and a certain class of elliptic fibrations over $\bC$ (those which admit compactifications to rational elliptic surfaces with a certain fibre type at infinity), then state a mirror conjecture relating these objects. The remainder of the paper is concerned with providing a series of justifications for this conjecture. In particular, we compare it with Dolgachev-Nikulin-Pinkham mirror symmetry for lattice polarized K3 surfaces and prove a number of compatibility results between the two, although unfortunately these do not extend as far as a full proof of the Doran-Harder-Thompson conjecture  in this case (Conjecture \ref{conj:dht}).

We conclude this portion of the introduction by discussing the relationship between our construction and previous work in this area. As mentioned above, mirror symmetry for weak del Pezzo surfaces (without lattice polarization) has previously been studied in the setting of homological mirror symmetry by Auroux, Katzarkov, and Orlov \cite{msdpsvccs}, albeit in the opposite direction: they are primarily concerned with comparing the complex structure of weak del Pezzo surfaces to the symplectic structure of their Landau-Ginzburg models, whereas we mostly work the other way. \cite{msdpsvccs} postulates that the mirror of a weak del Pezzo surface of degree $d$ should be an elliptic fibration over $\bC$ with $(12-d)$ singular fibres, all of which have Kodaira type $\mathrm{I}_1$. This agrees with our conjecture \emph{up to deformation}: in our setting, different choices of lattice polarization on the weak del Pezzo surface will lead to different configurations of singular fibres on its Landau-Ginzburg model, but all configurations may be deformed to the mirror as described by \cite{msdpsvccs}. A more detailed comparison between our conjecture and the work of \cite{msdpsvccs} is given in Remark \ref{rem:ako}.

\subsection{Structure of the paper}

We begin by reviewing some necessary lattice theory and notation in Section \ref{sec:lattices}. Following that, in Section \ref{sec:ratellsurf} we embark upon a study of rational elliptic surfaces. We describe a number of special classes of rational elliptic surfaces, which we call \emph{rational elliptic surfaces of type $d$}, and describe how to endow them with lattice polarizations. Finally, we embark on a detailed study of the complex moduli of our lattice polarized rational elliptic surfaces.

In Section \ref{sec:dp}, we switch to the other side of the mirror and study weak del Pezzo surfaces. We once again describe how to endow such surfaces with a lattice polarization, and prove some results about the structure of the K\"{a}hler cone.

Section \ref{sec:mirror} contains the main conjecture of the paper (Conjecture \ref{conj:mirror}), which postulates a mirror correspondence between the lattice polarized weak del Pezzo surfaces defined in Section \ref{sec:dp} and Landau-Ginzburg models derived from the lattice polarized rational elliptic surfaces described in Section \ref{sec:ratellsurf}. We then proceed to justify this correspondence by comparing the structure of the K\"{a}hler cones of the weak del Pezzos to the structure of the complex moduli spaces of the rational elliptic surfaces.

The final two sections of the paper are concerned with the relation between our mirror symmetry conjecture and other forms of mirror symmetry extant in the literature. We begin in Section \ref{sect:K3} by proving a number of compatibility results between our mirror symmetry conjecture and Dolgachev-Nikulin-Pinkham mirror symmetry for lattice polarized K3 surfaces. Our hope is that these results may form a starting point for an attack on the full Doran-Harder-Thompson conjecture in the $2$-dimensional setting.

Finally, Section \ref{sec:gs} discusses the compatibility between our conjecture and the Gross-Siebert program. We perform explicit Gross-Siebert computations for weak del Pezzo surfaces of high degree (close to the maximum value of $9$), and compare the output with the predictions made by our conjecture. These computations lead us to a final conjecture (Conjecture \ref{conj:GSconj}), which describes how we expect our ideas to fit with the Gross-Siebert program.

\subsection{Acknowledgements}

The authors would like to thank Mark Gross for several helpful discussions about the Gross-Siebert program and Andrew Harder for numerous insightful comments on various drafts of this paper.

\section{Some lattice theory}\label{sec:lattices}

We begin with a review of some necessary lattice theory. For $n \geq 0$, the lattice $\mathrm{I}_{1,n}$ is the unique odd indefinite unimodular lattice of signature $(1,n)$. It is generated by classes $l, e_1,\ldots,e_n$ with intersection numbers $l^2 = 1$, $e_i^2 = -1$ and $l.e_i = e_i.e_j = 0$ for all $i \neq j$. We will let $f_{9-n}$ denote the special class $f_{9-n} := 3l - \sum_{i=1}^n e_i$; note that $f_{9-n}$ has self-intersection $f^2_{9-n} = 9 - n$ (hence the name) and $f_{9-n} . e_i = 1$ for all $i$.

The lattice $\mathrm{II}_{1,1}$ is the unique even indefinite unimodular lattice of signature $(1,1)$ (the \emph{hyperbolic plane}). It is generated by classes $a, b$ with intersection numbers $a^2 = b^2 = 0$ and $a.b = 1$. We denote the special class $2a + 2b$ by $f_{8'}$; note that $f_{8'}^2 = 8$.

Next, let $L$ denote any lattice. A \emph{root} in $L$ is a class $\alpha \in L$ with $\alpha^2 = -2$. A negative definite lattice that is generated by its roots is called a \emph{root lattice}. If $R$ is a root lattice, a set of \emph{positive roots} in $R$ is a subset $\Phi^+$ of the set of all roots in $R$, chosen such that
\begin{itemize}
\item for any root $\alpha \in R$, exactly one of $\alpha$, $-\alpha$ is in $\Phi^+$, and
\item for any two distinct roots $\alpha, \beta \in \Phi^+$ such that $\alpha + \beta$ is a root, $\alpha + \beta \in \Phi^+$.
\end{itemize}
Note that there may be many different ways to choose a set of positive roots in $R$.

Given a set of positive roots in $R$, a \emph{simple root} is a root $\alpha \in \Phi^+$ that cannot be written as a sum of two other roots in $\Phi^+$. Standard results say that the set of simple roots forms a basis for $R$, and that all elements of $\Phi^+$ are linear combinations of simple roots with positive coefficients. Note that the set of simple roots in $R$ is dependent upon the choice of positive roots $\Phi^+$.

There is a well-known classification of root lattices in terms of their associated Coxeter diagrams, defined as follows: vertices in the Coxeter diagram correspond to simple roots $\alpha_i$, two vertices are joined by an edge if $\alpha_i.\alpha_j >0$, and edges are labelled by integers $n_{ij}$ defined such that $2\cos(\frac{\pi}{n_{ij}}) = \alpha_i.\alpha_j$. With this definition, the classification states that the Coxeter diagram corresponding to any root lattice is a union of simply-laced Dynkin diagrams. We will often identify a root lattice by the label ($A_n$, $D_n$, $E_n$) given to its corresponding Dynkin diagram.

By the same procedure, one may also define \emph{affine root lattices} to be lattices associated to affine (extended) Dynkin diagrams. Affine root lattices are degenerate, so the above discussion does not apply, but they do share many important properties with root lattices. We will usually identify affine root lattices by the labels ($\tilde{A}_n$, $\tilde{D}_n$, $\tilde{E}_n$) given to their corresponding affine Dynkin diagrams.

Finally, we will also need to know a little about the roots in $\mathrm{I}_{1,9}$. In this lattice, there is a linearly independent set of nine roots
\begin{align*}\alpha_i &= e_i - e_{i+1} \text{ for } 1 \leq i \leq 8, \\
\alpha_0 &= l - e_1 - e_2 - e_3. & \end{align*}
These roots have the important property that $\alpha_i.f_0 = 0$ for all $i$. The Coxeter diagram associated to this set of roots has type $\tilde{E}_8$. 

\section{Lattice polarized rational elliptic surfaces}\label{sec:ratellsurf}

We begin this section by recalling the theory of the moduli of rational elliptic surfaces. We treat such surfaces as \emph{Looijenga pairs} $(Y,D)$, where $Y$ is a rational surface and $D \in |-K_Y|$ is a cycle of rational curves. The study of such pairs was intiated by Looijenga in \cite{rsac}, who performed a detailed study in the setting where the cycle $D$ has $\leq 5$ components. Looijenga's results were extended to arbitrary numbers of components by Gross, Hacking, and Keel \cite{msacc}, whose exposition we follow below.

Let $Y$ denote a rational elliptic surface (in other words, $Y$ is a rational surface equipped with a morphism $Y \to \bP^1$, whose general fibre is an elliptic curve). Recall that the Picard lattice $\Pic(Y)$ is isomorphic to $\mathrm{I}_{1,9}$. Following  \cite[Definition 1.7]{msacc}, let $\calC^+_Y$ denote the connected component of the cone $\{\beta \in \Pic(Y)\otimes \bR \mid \beta^2 > 0\}$ which contains the ample classes. Define $\calE \subset \Pic(Y)$ to be the collection of classes $E$ with $E^2 = K_Y.E = -1$ and $E.H > 0$ for a given ample $H$; by \cite[Lemma 2.13]{msacc}, this is independent of the choice of $H$. Then let $\calC^{++}_Y \subset \calC^+_Y$ be the subcone consisting of those $\beta \in \calC^+_Y$ satisfying the inequalities $\beta.E \geq 0$ for all $E \in \calE$.

We next define a special type of rational elliptic surface.

\begin{definition} Let $1 \leq d \leq 9$ be an integer. A \emph{rational elliptic surface of type $d$} is a pair $(Y,D)$ consisting of a rational elliptic surface $Y$ and a fibre $D \subset Y$ of Kodaira type $\mathrm{I}_d$.
\end{definition}

\begin{remark} Persson \cite{ckfres} has classified the possible configurations of Kodaira fibres on a rational elliptic surface. A consequence of his classification is that there are two distinct deformation types of rational elliptic surfaces containing a fibre of type $\mathrm{I}_8$. These correspond to the two different ways of embedding an $\mathrm{I}_8$ fibre into the Picard lattice $\mathrm{I}_{1,9}$ or, equivalently, to the two different embeddings of the $A_7$ root lattice into the affine root lattice $\tilde{E}_8 \subset I_{1,9}$. We call these two possibilities \emph{rational elliptic surfaces of type $8$}, which correspond to the embedding $A_7 \subset \tilde{E}_8$ given by $\{\alpha_2,\alpha_3,\ldots,\alpha_8\}$, and \emph{rational elliptic surfaces of type $8'$}, which correspond to the embedding $A_7 \subset \tilde{E}_8$ given by $\{\alpha_0,\alpha_3,\ldots,\alpha_8\}$. One may also distinguish these possibilities by examining the torsion subgroup of the Mordell-Weil group: this subgroup is trivial for rational elliptic surfaces of type $8$, but nontrivial for rational elliptic surfaces of type $8'$.
\end{remark}

Note that if $(Y,D)$ is a rational elliptic surface of type $d$, then $D \in |-K_Y|$ is a cycle of rational curves, so $(Y,D)$ is an example of a Looijenga pair. Denote the components of $D$ by $D_1,D_2,\ldots,D_d$, cyclically ordered.

\begin{definition}\label{def:marking} A \emph{marking} of a rational elliptic surface $(Y,D)$ of type $d$, for $1 \leq d \leq 9$, is a choice of isometry $\mu \colon \Pic(Y) \to \mathrm{I}_{1,9}$ satisfying
\[\mu(D_i) = \delta_i := \left\{ \begin{array}{ll} \alpha_{9-i} & \text{for } 1 \leq i \leq d-1 \\
f_0 - \alpha_8 - \alpha_7 - \cdots - \alpha_{10-d} & \text{for } i=d,  \end{array} \right.\]
with the additional compatibility condition that if $(Y_0,D_0,\mu_0)$ is a (fixed) generic marked rational elliptic surface of type $d$, then $\mu(\calC^{++}_Y) = \mu_0(\calC^{++}_{Y_0})$. Denote the cone $\mu_0(\calC^{++}_{Y_0})$ by $\calC^{++}_d \subset \mathrm{I}_{1,9}$.

 If $d = 1$, by convention we have $\mu(D_1) = \delta_1 := f_0$.
\end{definition}

However, this is not quite the complete picture; we also need to define a marking on a rational elliptic surface of type $8'$.

\begin{definition}\label{def:marking'} A \emph{marking} of a rational elliptic surface $(Y,D)$ of type $8'$ is a choice of isometry $\mu \colon \Pic(Y) \to \mathrm{I}_{1,9}$ satisfying
\[\mu(D_i) = \delta_i := \left\{ \begin{array}{ll} \alpha_{9-i} & \text{for } 1 \leq i \leq 6 \\
\alpha_0 & \text{for } i=7 \\
f_0 - \alpha_0 - \alpha_8 - \alpha_7 - \cdots - \alpha_3 & \text{for } i=8,  \end{array} \right.\]
with the additional compatibility condition that if $(Y_0,D_0,\mu_0)$ is a (fixed) generic marked rational elliptic surface of type $8'$, then $\mu(\calC^{++}_Y) = \mu_0(\calC^{++}_{Y_0})$. Denote the cone $\mu_0(\calC^{++}_{Y_0})$ by $\calC^{++}_{8'} \subset \mathrm{I}_{1,9}$.
\end{definition}

\begin{definition} Let $F_d$ (resp. $F_{8'}$) denote the sublattice of $\mathrm{I}_{1,9}$ generated by the classes $\delta_i$ from Definition \ref{def:marking} (resp. Definition \ref{def:marking'}).  
\end{definition}

In general, when no confusion is likely to result, we will simply refer to markings of rational surfaces of type  $d$, lattices $F_d$, and cones $\calC_d^{++}$, with the understanding that this includes the case $d = 8'$ unless otherwise specified. The orthogonal complement of $F_d$ in $\mathrm{I}_{1,9}$ will be denoted $F_d^{\perp}$; the lattices $F_d^{\perp}$ have rank $10 - d$, with intersection form given by Table \ref{tab:ellipticlatticetype}.

\begin{table}
\begin{tabular}{|c|c|c|}
\hline
$d$ & $F_d^{\perp}$ & $F_d^{\perp}/\langle f_0 \rangle$ \\
\hline 
\rule{0pt}{3ex} $1$ & $\tilde{E}_8$ & $E_8$ \\
$2$ & $\tilde{E}_7$ & $E_7$ \\

$3$ & $\tilde{E}_6$ & $E_6$\\
$4$ & $\tilde{D}_5$ & $D_5$ \\

$5$ & $\tilde{A}_4$ & $A_4$ \\
$6$ & $\tilde{A}_2 + A_1$ & $A_2 + A_1$ \\
$7$ & $\left(\begin{array}{ccc} -2 & 1 & 1 \\ 1 & -4 & 3 \\ 1 & 3 & -4 \end{array}\right)$ & $\left(\begin{array}{cc} -2 & 1 \\ 1 & -4 \end{array}\right)$ \\
$8$ & $\left(\begin{array}{cc} -8 & 8 \\ 8 & -8 \end{array}\right)$ & $(-8)$ \\
$8'$ & $\tilde{A}_1$ & $A_1$ \\
$9$ & $(0)$ & $\{0\}$ \\
\hline
\end{tabular}
\caption{Lattices $F_d^{\perp}$}
\label{tab:ellipticlatticetype}
\end{table}

It follows from the results of \cite{msacc} that there is a period map from the moduli space of marked rational elliptic surfaces of type $d$ to $\Hom(F_d^{\perp},\bG_m)$, which is injective in a neighbourhood of a generic point (a more precise statement will be made in the next section, see Theorem \ref{thm:ratellmoduli}). Indeed, to a marked rational elliptic surface $(Y,D,\mu)$ of type $d$, we may associate the period point $\phi \in \Hom(F_d^{\perp},\bG_m)$ defined by
\begin{align*}
\phi\colon F_d^{\perp} &\longrightarrow \Pic^0(D) \cong \bG_m \\
\alpha &\longmapsto \mu^{-1}(\alpha)|_{D}.
\end{align*} 

Note that $f_0 \in F_d^{\perp}$ for all $d$ and $\mu^{-1}(f_0) \in \Pic(Y)$ is the class of a fibre, so $\mu^{-1}(f_0)|_D \cong \calO_D$. Thus the period point $\phi$ of any rational elliptic surface must satisfy $\phi(f_0) = 1$, so $\phi$ is completely determined by its action on the quotient $F_d^{\perp}/\langle f_0 \rangle$. The quotient $F_d^{\perp}/\langle f_0 \rangle$ is a negative definite lattice of rank $9-d$, with intersection form given by Table \ref{tab:ellipticlatticetype}.

\subsection{Lattice polarizations and moduli} \label{sec:ratellpol} Next we add the concept of a \emph{lattice polarization}.  Let $L \subset F_d^{\perp}$ be a negative definite primitive sublattice and let $R_L$ denote the sublattice of $L$ generated by the roots in $L$. Since $L$ is negative definite, $R_L$ is a root lattice and $L$ is isomorphic to its image in $F_d^{\perp}/\langle f_0 \rangle$.

\begin{definition}   An \emph{$L$-polarization} on a rational elliptic surface $(Y,D)$ of type $d$ is a primitive embedding $\nu\colon L \hookrightarrow \Pic(Y)$ such that
\begin{itemize}
\item $\nu(\beta).D_i = 0$ for all $\beta \in L$ and all $1 \leq i \leq d$, and
\item there exists a set of positive roots $\Phi^+$ in $R_L$ so that $\nu(\Phi^+)$ is contained in the effective cone in $\Pic(Y)$.
\end{itemize}
\end{definition}

\begin{remark} \label{rem:ellipticroot} Note that the important part of this definition really only depends upon the root lattice $R_L$, not on the full lattice $L$. For most purposes, it suffices to take $L \subset F_d^{\perp}$ to be a root lattice, so that $L = R_L$. However, the greater generality in the definition above works better with mirror symmetry; this will be explored later in this paper.
\end{remark}

This definition is all well and good, but to make statements about moduli we need something stronger. For the remainder of this section, assume that we have fixed a primitive embedding $L \hookrightarrow F_d^{\perp}$ of a negative definite lattice $L$ into $F_d^{\perp}$ and chosen a set of positive roots $\Phi_L^+ \subset R_L$.

\begin{definition} A \emph{marked $L$-polarization} on a rational elliptic surface $(Y,D)$ of type $d$ is a choice of marking $\mu \colon \Pic(Y) \to \mathrm{I}_{1,9}$ such that the preimage $\mu^{-1}(\Phi^+_L)$ is contained in the effective cone in $\Pic(Y)$ (i.e. so that $\mu^{-1}|_L$ is an $L$-polarization).
\end{definition}

We will describe moduli spaces for marked $L$-polarized rational elliptic surfaces of type $d$ by extending the results of \cite[Section 6]{msacc}. This extension is largely fairly straightforward; we provide enough details to make precise statements of the main results and leave the remainder to the reader.

Let $\calM_{d,L}$ denote the moduli stack of families of marked $L$-polarized rational elliptic surfaces of type $d$ (here we work over the analytic category, so $\calM_{d,L}$ is a stack over the category of analytic spaces). More precisely, for an analytic space $S$, the objects of the category $\calM_{d,L}(S)$ are morphisms
\[\pi\colon (\calY, \calD = \calD_1 + \cdots + \calD_d) \longrightarrow S\]
together with an isomorphism
\[\mu\colon R^2\pi_*\bZ \stackrel{\sim}{\longrightarrow} \mathrm{I}_{1,9} \times S \]
so that the following conditions hold
\begin{enumerate}
\item the morphism $\calY \to S$ is a flat family of surfaces;
\item the analytic space $\calD_i$ is a divisor on $\calY/S$ for each $1 \leq i \leq d$; and
\item each closed fibre $(\calY_s, \calD_s, \mu_s)$ is a marked $L$-polarized rational elliptic surface of type $d$.
\end{enumerate}
The morphisms in the category from $(\calY,\calD)/S$ to $(\calY',\calD')/S'$ over a morphism $S \to S'$ are isomorphisms
\[(\calY,\calD) \stackrel{\sim}{\longrightarrow} (\calY',\calD') \times_{S'} S\]
over $S$ compatible with the markings.

For a generic $L$-polarized rational elliptic surface $(Y,D)$ of type $d$, let $K_{d,L}$ denote the subgroup of the automorphism group of $Y$ that acts trivially on $\Pic(Y)$. Then every object $(\calY,\calD)/S$ in $\calM_{d,L}(S)$ has a canonical subgroup $K_{d,L} \times S$ of its automorphism group, consisting of automorphisms which act trivially on the Picard groups of the fibres of $\calY$. Let $\tilde{\calM}_{d,L}$ denote the rigidification of $\calM_{d,L}$ along $K_{d,L}$, in the sense of \cite[Section 5]{tbac}; the objects of $\calM_{d,L}$ and $\tilde{\calM}_{d,L}$ coincide locally, but the automorphism group in $\tilde{\calM}_{d,L}$ is the quotient of the automorphism group in $\calM_{d,L}$ by $K_{d,L}$. 

The moduli stack $\calM_{d,0}$ is precisely the moduli stack of marked rational elliptic surfaces of type $d$ and $\tilde{\calM}_{d,0}$ is its rigidification. Results in \cite{msacc} show that the period map $\calM_{d,0} \to \Hom(F_d^{\perp},\bG_m)$ is well-defined and descends to a period map $\tilde{\calM}_{d,0} \to \Hom(F_d^{\perp},\bG_m)$. By the discussion above, these maps restrict to maps $\calM_{d,0} \to \calP_{d,0}$ and $\tilde{\calM}_{d,0} \to \calP_{d,0}$ respectively, where $\calP_{d,0} := \Hom(F_d^{\perp}/\langle f_0 \rangle, \bG_m)$.

Next we define the period map and period domain for more general marked $L$-polarized  rational elliptic surfaces of type $d$. To do this, we first need a small lemma.

\begin{lemma} \label{lem:periods} Let $\phi\colon F_d^{\perp} \longrightarrow \Pic^0(D) \cong \bG_m$ denote the period point of a marked $L$-polarized rational elliptic surface $(Y,D,\mu)$ of type $d$. Then $\phi(\alpha) = 1$ for any positive root $\alpha \in \Phi^+_L$.

Conversely, if $\alpha \in F_d^{\perp}$ is any class with $\alpha^2 = -2$ and $\phi(\alpha) = 1$, then either $\mu^{-1}(\alpha)$ or $\mu^{-1}(-\alpha)$ is contained in the effective cone in $\Pic(Y)$.
\end{lemma}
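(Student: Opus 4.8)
The plan is to prove both directions by translating the period condition $\phi(\alpha)=1$ into the statement that the line bundle $\calO_Y(\mu^{-1}(\alpha))$ restricts \emph{trivially} to $D$, and then extracting an effective representative via a restriction exact sequence. Throughout write $C := \mu^{-1}(\alpha)$, and note two facts used in both directions: since $\alpha \in F_d^{\perp}$ and $\mu$ is an isometry, $C.D_i = 0$ for every $1 \le i \le d$; and since $K_Y = -D = -\sum_i D_i$, this gives $C.K_Y = 0$. Recall also that for a rational elliptic surface $\chi(\calO_Y) = 1$ and $K_Y^2 = 0$.

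For the forward direction, let $\alpha \in \Phi^+_L$. By the definition of a marked $L$-polarization, $C$ is effective. Since $C.D = \sum_i C.D_i = 0$ and the fibre $D$ is nef, every irreducible component of $C$ must be contained in a fibre of the elliptic fibration, so $C$ is vertical. Separating off the components lying in $D$, write $C = \sum_i m_i D_i + W$ with $m_i \ge 0$ and $W$ supported on fibres other than $D$. The conditions $C.D_j = 0$, together with $W.D_j = 0$, force $(m_i)$ to lie in the radical of the (negative semidefinite, affine $\tilde{A}_{d-1}$) intersection matrix of the $\mathrm{I}_d$ fibre, which is spanned by $D = \sum_i D_i$; hence $\sum_i m_i D_i = kD$ for some $k \ge 0$. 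Restricting to $D$ and using $\calO_Y(D)|_D \cong \calO_D$ (as $D$ is a fibre, pulled back from a point of $\bP^1$) and $W|_D \cong \calO_D$ (disjoint support), I obtain $C|_D \cong \calO_D$, i.e. $\phi(\alpha) = 1$.

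For the converse, suppose $\alpha \in F_d^{\perp}$ with $\alpha^2 = -2$ and $\phi(\alpha) = 1$; the latter means precisely that $C|_D \cong \calO_D$ in $\Pic^0(D)$. I would argue by contradiction, assuming that neither $C$ nor $-C$ is effective, i.e. $h^0(C) = h^0(-C) = 0$. First, $h^2(C) = h^0(K_Y - C) = h^0(-D - C) = 0$, since an effective $-D-C$ would, after adding the effective divisor $D$, make $-C$ effective. As $C^2 = -2$ and $C.K_Y = 0$, Riemann-Roch gives $\chi(C) = 0$, so $h^1(C) = 0$ as well. Now I feed $C|_D \cong \calO_D$ into the restriction sequence
\[0 \longrightarrow \calO_Y(C + K_Y) \longrightarrow \calO_Y(C) \longrightarrow \calO_D \longrightarrow 0\]
(using $K_Y = -D$). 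Its cohomology, with $H^0(C) = H^1(C) = 0$ and $H^0(\calO_D) = \bC$, yields $H^1(C + K_Y) \cong \bC$, while $H^0(C + K_Y) \hookrightarrow H^0(C) = 0$ and $H^2(C + K_Y) \cong H^0(-C)^* = 0$ by Serre duality. Hence $\chi(C + K_Y) = -1$. But $(C+K_Y)^2 = -2$ and $(C+K_Y).K_Y = 0$, so Riemann-Roch gives $\chi(C + K_Y) = 0$, a contradiction. Therefore one of $C$, $-C$ is effective, as required.

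The routine parts are the intersection-theoretic and Riemann-Roch bookkeeping. The step I expect to require the most care is the converse: one must set up the restriction sequence with exactly the right twist, so that the cokernel is precisely $\calO_D$ (which is where the hypothesis $\phi(\alpha)=1$ enters), and control every cohomology group appearing, since the contradiction hinges on the single discrepancy between the two computations of $\chi(C+K_Y)$. A secondary point to verify carefully in the forward direction is the identification of the radical of the $\mathrm{I}_d$ intersection matrix, together with the degenerate case $d = 1$, where $D$ is irreducible nodal and the argument simplifies.
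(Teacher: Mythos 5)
Your proof is correct, and in substance it travels the same route as the paper's, but it is considerably more self-contained than what the paper actually writes. For the forward direction, the paper simply asserts that an effective divisor $C$ with $C.D_i = 0$ for all $i$ satisfies $\calO_Y(C)|_D \cong \calO_D$; your argument — each component of $C$ is vertical since $D$ is nef, the components inside $D$ must form a multiple of the whole fibre because $(m_i)$ lies in the radical of the negative semidefinite $\tilde{A}_{d-1}$ intersection matrix, and both $\calO_Y(kD)|_D$ and the contribution of components on other fibres are trivial — is exactly the justification this assertion needs, and it handles $d=1$ correctly. For the converse, the paper does no work at all: it invokes \cite[Lemma 3.3]{msacc}, which is precisely the statement that on a Looijenga pair a line bundle $\calL$ with $\calL^2 = -2$ and $\calL|_D \cong \calO_D$ has $\calL$ or $\calL^{-1}$ effective. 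Your Riemann--Roch/Serre duality/restriction-sequence argument is a correct standalone proof of that cited lemma (and is essentially how it is proved in \cite{msacc}), with all the cohomological bookkeeping in order: $h^0(C) = h^2(C) = 0$ forces $h^1(C) = 0$ by Riemann--Roch, and then the sequence $0 \to \calO_Y(C+K_Y) \to \calO_Y(C) \to \calO_D \to 0$ produces the contradiction. One minor streamlining: the long exact sequence alone already gives $H^2(C+K_Y) \cong H^1(\calO_D) \cong \bC$, which contradicts Serre duality ($H^2(C+K_Y) \cong H^0(-C)^* = 0$) directly, so you could avoid routing the contradiction through the two computations of $\chi(C+K_Y)$.
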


\begin{proof} Suppose that $\alpha \in\Phi^+_L$ is a positive root. By definition, $\mu^{-1}(\alpha) = \calO_Y(C)$, for some effective divisor $C \subset Y$ with $C.D = 0$. But then $\calO_Y(C)|_D \cong \calO_D$, so we have $\phi(\alpha) = 1$.

For the converse statement, suppose that  $\alpha \in F_d^{\perp}$ is any class with $\alpha^2 = -2$ and $\phi(\alpha) = 1$. Then $\mu^{-1}(\alpha) =: \calL$ is a line bundle on $Y$ with $\calL^2 = -2$ and $\calL|_{D} \cong \calO_D$. It follows immediately from \cite[Lemma 3.3]{msacc} that either $\calL$ or $\calL^{-1}$ is effective.
\end{proof}

From this lemma, we see that that the period point of a marked $L$-polarized rational elliptic surface $(Y,D,\mu)$ of type $d$ is completely determined by the action of $\phi$ on the quotient $F_d^{\perp}/\langle R_L, f_0 \rangle$. 

\begin{definition} The \emph{period domain} for marked $L$-polarized rational elliptic surfaces of type $d$ is the space 
\[\calP_{d,L} := \Hom(F_d^{\perp}/\langle R_L, f_0 \rangle,\bG_m).\]
There are natural \emph{period maps} $\calM_{d,L} \to \calP_{d,L}$ and $\tilde{\calM}_{d,L} \to \calP_{d,L}$, given by restriction of the usual (unpolarized) period map.
\end{definition}

Let $\calC_{d,L}^{++} \subset \mathrm{I}_{1,9}$ denote the subcone of $\calC_d^{++}$ consisting of those $\beta \in \calC^{++}_d$ satisfying 
\begin{enumerate}
\item $\beta.\delta_i \geq 0$ for the classes $\delta_i$ from Definitions \ref{def:marking} and \ref{def:marking'}; and
\item $\beta.\alpha \geq 0$ for all positive roots $\alpha \in \Phi_L^+$.
\end{enumerate}

Let $\Psi_{d,L} \subset F_d^{\perp}$ denote the set of roots $\alpha \in F_d^{\perp} \setminus R_L$ such that there exists a marked $L$-polarized rational elliptic surface $(Y,D,\mu)$ of type $d$ with $\mu^{-1}(\alpha)$ effective. $\Psi_{d,L}$ should be thought of as the possible ways to ``enhance'' $L$: it is the set of roots which could be added to $L$ and still give a valid polarizing lattice.

Now let $\Sigma$ denote the set of connected components of the complement
\[\calC_{d,L}^{++}\, \bigg\backslash\, \bigcup_{\alpha \in \Psi_{d,L}} \alpha^{\perp},\]
and let $Q = \calP_{d,L} \times \Sigma$. Define an \'{e}tale equivalence relation on $Q$ as follows: $(\phi,\sigma) \sim (\phi,\sigma')$ if and only if $\sigma$ and $\sigma'$ are contained in the same connected component of $\calC_{d,L}^{++} \setminus \bigcup_{\alpha \in \Psi_{\phi}} \alpha^{\perp}$, where
\[\Psi_{\phi} := \left\{\alpha \in \Psi_{d,L}\, \middle|\, \phi(\alpha) = 1\right\}.\]
By Lemma \ref{lem:periods}, we see that $\Psi_{\phi}$ is precisely the set of enhancements of $L$ that occur at the period point $\phi$; in other words, if $(Y,D,\mu)$ is a marked $L$-polarized rational elliptic surface of type $d$ with period point $\phi$, then $\Psi_{\phi}$ encodes the set of effective $(-2)$-classes in $\Pic(Y)$ which do not intersect any component of $D$ and do not lie in $\mu^{-1}(L)$.

Finally, let $\tilde{\calP}_{d,L} := Q / \sim$ denote the quotient of $Q$ by this equivalence relation. Note that we have a natural map $\tilde{\calP}_{d,L} \to \calP_{d,L}$ given by projection, which is an isomorphism over the analytic open set $U_{d,L} := \{\phi \in \calP_{d,L} \mid \Psi_{\phi} = \emptyset\}$.

Then we have the following theorem, which should be thought of as an analogue of \cite[Theorem 6.1]{msacc}.

\begin{theorem} \label{thm:ratellmoduli} There is an isomorphism $\tilde{\calM}_{d,L} \cong \tilde{\calP}_{d,L}$ which is compatible with the period map.
\end{theorem}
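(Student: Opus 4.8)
The plan is to follow the strategy of \cite[Theorem 6.1]{msacc}, reducing the polarized statement to the unpolarized Torelli and surjectivity results there while carefully tracking the extra data contributed by the polarizing lattice $L$. Concretely, I would first produce a morphism $\tilde{\calM}_{d,L} \to \tilde{\calP}_{d,L}$ refining the period map, and then prove it is an isomorphism by establishing injectivity (a Torelli statement) and surjectivity (surjectivity of periods together with realization of every chamber).

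\emph{Construction of the map.} Given a marked $L$-polarized rational elliptic surface $(Y,D,\mu)$ of type $d$, its underlying marked surface has a period point in $\calP_{d,0}$; by Lemma \ref{lem:periods} this point satisfies $\phi(\alpha) = 1$ for every $\alpha \in \Phi_L^+$, so it descends to a point $\phi \in \calP_{d,L} = \Hom(F_d^\perp/\langle R_L, f_0\rangle, \bG_m)$. To record the chamber, I would transport the nef cone of $Y$ by $\mu$ and intersect with $\calC_{d,L}^{++}$. By adjunction every $(-2)$-curve $C$ on $Y$ satisfies $C.D = 0$, so each such curve either lies in $\mu^{-1}(R_L)$ (its class being a positive root already imposed in the definition of $\calC_{d,L}^{++}$) or has class in $\Psi_\phi$. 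Hence $\mu(\Nef(Y)) \cap \calC_{d,L}^{++}$ is exactly a connected component of $\calC_{d,L}^{++} \setminus \bigcup_{\alpha \in \Psi_\phi}\alpha^\perp$. Choosing any chamber $\sigma \in \Sigma$ inside it (possible since $\Psi_\phi \subseteq \Psi_{d,L}$) yields a point $(\phi,\sigma) \in Q$ whose class in $\tilde{\calP}_{d,L} = Q/\!\sim$ is, by the very definition of $\sim$, independent of the choice and depends only on the isomorphism class of $(Y,D,\mu)$. Standard deformation theory shows the assignment is algebraic and descends to the rigidification $\tilde{\calM}_{d,L}$.

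\emph{Injectivity and surjectivity.} For injectivity I would invoke the Torelli theorem for Looijenga pairs from \cite{msacc}: the period point determines $(Y,D)$ up to automorphisms acting trivially on $\Pic(Y)$, which are precisely the automorphisms quotiented out in passing to $\tilde{\calM}_{d,L}$, while the recorded chamber $\sigma$ pins down the marking $\mu$ compatibly. Surjectivity follows from surjectivity of the GHK period map: any $\phi \in \calP_{d,L}$ lifts to a period point on $F_d^\perp$ with $\phi(\alpha)=1$ for $\alpha \in \Phi_L^+$, which is realized by a marked surface whose $L$-polarization condition holds automatically by the converse part of Lemma \ref{lem:periods}; deforming within the period fibre then realizes each chamber $\sigma$, since the set of effective $(-2)$-classes orthogonal to $D$ at $\phi$ is exactly $\Psi_\phi$. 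Compatibility with the period map is immediate, as the projection $\tilde{\calP}_{d,L}\to\calP_{d,L}$ composed with our morphism recovers the period map by construction.

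\emph{Main obstacle.} The crux is verifying that the combinatorial equivalence relation $\sim$ matches the geometry exactly, namely that two chambers yield isomorphic marked $L$-polarized surfaces if and only if they lie in the same component of $\calC_{d,L}^{++}\setminus\bigcup_{\alpha\in\Psi_\phi}\alpha^\perp$. This rests on knowing precisely which roots of $F_d^\perp\setminus R_L$ are realized by effective classes, i.e.\ on the correct identification of $\Psi_{d,L}$ and $\Psi_\phi$, and on checking that this realization is stable in families so that the chamber assignment is locally constant where it should be. Relative to \cite{msacc}, the additional bookkeeping needed to keep the embedding $L \hookrightarrow \Pic(Y)$ primitive and to separate the always-effective roots of $R_L$ from the genuine wall classes in $\Psi_{d,L}$ is where most of the care is required; everything else reduces to the unpolarized results.
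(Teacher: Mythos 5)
Your proposal takes essentially the same approach as the paper: the paper's proof of Theorem \ref{thm:ratellmoduli} consists of the single statement that the result follows from the same argument used to prove \cite[Theorem 6.1]{msacc}, which is precisely the reduction you carry out. Your expanded sketch (descent of the period point via Lemma \ref{lem:periods}, the chamber bookkeeping in $\calC_{d,L}^{++}$, Torelli for injectivity, surjectivity of the period map) is a faithful elaboration of how that argument adapts to the $L$-polarized setting.
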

\begin{proof} The result follows from exactly the same argument used to prove \cite[Theorem 6.1]{msacc}. \end{proof}

In particular, we see that the period map $\tilde{\calM}_{d,L} \to \calP_{d,L}$ is an isomorphism over the open set $U_{d,L}$. This open set corresponds to the set of marked $L$-polarized rational elliptic surfaces of type $d$ that do not admit polarizations by any overlattice $L' \supset L$ with $R_{L'}$ strictly larger than $R_L$. 

Now let $L' \subset F_d^{\perp}$ be another negative definite lattice, with root lattice $R_{L'}$ and set of positive roots $\Phi_{L'}^+ \subset R_{L'}$. Suppose that $L \subset L'$ and $\Phi_L^+ \subset \Phi_{L'}^+ \subset \Phi_L^+ \cup \Psi_{d,L}$. Define $\Psi_{L'/L} := L' \cap  \Psi_{d,L} = \Phi_{L'}^+ \setminus \Phi_L^+$. Then there is a natural embedding of period domains $\calP_{d,L'} \hookrightarrow \calP_{d,L}$, whose image is the set $\{\phi \in \calP_{d,L} \mid \Psi_{L'/L} \subset \Psi_{\phi}\}$. 

From this and the discussion above, it is easy to see that the period domain $\calP_{d,L}$ admits a stratification 
\begin{equation}\label{eq:strata} \calP_{d,L} = \coprod_{L \subset L'} U_{d,L'},\end{equation}
where the disjoint union is taken over all negative definite overlattices $L \subset L' \subset F_d^{\perp}$ which may be obtained from $L$ by adjoining roots from $\Psi_{d,L}$. The strata may be identified with (open sets inside) moduli spaces of marked $L'$-polarized rational elliptic surfaces of type $d$, as above.

\section{Lattice polarized weak del Pezzo surfaces}\label{sec:dp}

Next we perform a similar analysis for weak del Pezzo surfaces. We begin by recalling the following definition.

\begin{definition} A \emph{weak del Pezzo surface} is a smooth rational surface $X$ which has nef and big anticanonical divisor $-K_X$.
\end{definition}

Results of Demazure \cite{sdp} and Coray and Tsfasman \cite[Proposition 0.4]{asdps} show that any weak del Pezzo surface is isomorphic to either $\bP^1 \times \bP^1$, the Hirzebruch surface $\bF_2$, or a blow-up of $\bP^2$ at $0 \leq n \leq 8$ points in almost general position (a set of points is in \emph{almost general position} if no stage of the blowing-up involves blowing up a point which lies on a $(-2)$-curve; in particular, infinitely near points are allowed as long as this condition is not violated).

In light of this, define a \emph{weak del Pezzo pair of degree $d$}, for $1 \leq d \leq 9$, to be a pair $(X,C)$ consisting of a weak del Pezzo surface $X$ obtained as a blow-up of $\bP^2$ in $(9-d)$ points in almost general position, along with a \emph{smooth} anticanonical divisor $C \in |-K_X|$. Note that $C^2 = d$.

However, this does not quite encompass all possible cases: we also need to account for $\bP^1 \times \bP^1$ and $\bF_2$. Define a \emph{weak del Pezzo pair of degree $8'$} to be a pair $(X,C)$, where $X = \bP^1 \times \bP^1$ or $\bF_2$ and $C \in |-K_X|$ is a smooth anticanonical divisor. In both of these cases $C^2 = 8$.

Note that the Picard lattice $\Pic(X)$ for a weak del Pezzo pair $(X,C)$ of degree $d$ is isometric to $\mathrm{I}_{1,9-d}$ (for $1 \leq d \leq 9$) and $\mathrm{II}_{1,1}$ when $d = 8'$. Denote the lattice $\mathrm{I}_{1,9-d}$ (resp. $\mathrm{II}_{1,1}$) by $\Lambda_d$ (resp. $\Lambda_{8'}$). Recall from Section \ref{sec:lattices} that the lattices $\Lambda_d$ contain a distinguished class $f_d$.

In the remainder of this paper, when no confusion is likely to result, we will simply refer to weak del Pezzo pairs of degree $d$, distinguished classes $f_d$, and lattices $\Lambda_d$, with the understanding that this includes the case $d = 8'$ unless otherwise specified. 

As in the previous section, we can define a marking on a weak del Pezzo surface of degree $d$.

\begin{definition} \label{defn:dPmarking} A \emph{marking} on a weak del Pezzo pair $(X,C)$ of degree $d$ is a choice of isometry $\mu\colon \Pic(X) \to \Lambda_d$ satisfying $\mu(C) = f_d$.
\end{definition}

\begin{remark} \label{rem:C++} To make reasonable statements about the moduli spaces of marked weak del Pezzo pairs, we suspect that this definition should also contain a condition analogous to the condition on $\calC^{++}_d$ in Definition \ref{def:marking}. This is needed to kill any automorphisms of $\Lambda_d$ that are not realized by deformations of $(X,C)$.
\end{remark}

\subsection{Lattice polarizations and the K\"{a}hler cone} \label{sec:dPpol} We next define the concept of a \emph{lattice polarization}. Let $f_d^{\perp}$ denote the orthogonal complement of $f_d$ in $\Lambda_d$. Note that $f_d^{\perp}$ is negative definite of rank $9 - d$, with intersection form given by Table \ref{tab:latticetype}. Comparing this table to Table \ref{tab:ellipticlatticetype}, we discover that $f_d^{\perp} \cong F_d^{\perp}/\langle f_0 \rangle$ for all $d$; this is the first manifestation of  mirror symmetry in this setting

Let $L \subset f_d^{\perp}$ denote a primitive sublattice and let $R_L$ denote the sublattice of $L$ generated by roots $\alpha \in L$. Both $L$ and $R_L$ are negative definite so, in particular, $R_L$ is a root lattice.

\begin{table}
\begin{tabular}{|c||c|c|c|c|c|c|c|c|c|c|}
\hline
$d$ & $1$ & $2$ & $3$ & $4$ & $5$ & $6$ & $7$ & $8$ & $8'$ & $9$ \\
\hline
\rule{0pt}{3ex} $f_d^{\perp}$ & $E_8$ & $E_7$ & $E_6$ & $D_5$ & $A_4$ & $A_2 + A_1$ & $\left(\begin{smallmatrix} -2 & 1 \\ 1 & -4 \end{smallmatrix}\right)$ & $( -8 )$ & $A_1$ & $\{0\}$ \\
\hline
\end{tabular}
\caption{Lattices $f_d^{\perp}$}
\label{tab:latticetype}
\end{table}

\begin{definition}   An \emph{$L$-polarization} of a weak del Pezzo pair $(X,C)$ of type $d$ is a primitive embedding $\nu\colon L \hookrightarrow \Pic(X)$ such that
\begin{itemize}
\item $\nu(\beta).C = 0$ for all $\beta \in L$, and
\item there exists a set of positive roots $\Phi^+$ in $R_L$ so that $\nu(\Phi^+)$ is contained in the effective cone in $\Pic(X)$.
\end{itemize}
\end{definition}

\begin{remark} As in the previous section (see Remark \ref{rem:ellipticroot}), we see that the essential part of this definition really only depends upon the root lattice $R_L$. Indeed, we would not really lose anything by always taking $L$ to be a root sublattice of $f_d^{\perp}$. However, as before, the definition above has better mirror symmetric properties.
\end{remark}

As before, we may also define the notion of a marked $L$-polarization. For the remainder of this section, assume that we have fixed a primitive embedding $L \hookrightarrow f_d^{\perp} \subset \Lambda_d$ and chosen a set of positive roots $\Phi_L^+ \subset R_L$.

\begin{definition} A \emph{marked $L$-polarization} on a weak del Pezzo pair $(X,C)$ of type $d$ is a choice of marking $\mu \colon \Pic(X) \to \Lambda_d$ such that the preimage $\mu^{-1}(\Phi_L^+)$ is contained in the effective cone in $\Pic(X)$ (i.e. so that $\mu^{-1}|_L$ is an $L$-polarization).
\end{definition}

Next we look at the K\"{a}hler cone of a lattice polarized weak del Pezzo pair of degree $d$. Let $(X,C,\mu)$ denote a marked $L$-polarized weak del Pezzo pair of degree $d$. The K\"{a}hler cone of $X$ is equal to its ample cone, since the N\'{e}ron-Severi lattice $\NS(X)$ is equal to $H^2(X,\bZ)$, and its closure is the nef cone $\Nef(X)$. The nef cone $\Nef(X)$ is dual to the cone of effective curves on $X$ which, by \cite[Proposition 6.2]{dhmq}, is rational polyhedral and generated by a finite set of rays. We would like to see what information the lattice polarization gives us about the structure of these cones. 

\begin{proposition} Let $(X,C,\mu)$ be a marked $L$-polarized weak del Pezzo pair of degree $d$. If $\alpha \in \Phi_L^+$ is a simple root, then $(\mu^{-1}(\alpha))^{\perp}$ is a codimension $1$ face of $\Nef(X)$ which contains the ray generated by $[C] = [-K_X]$.
\end{proposition}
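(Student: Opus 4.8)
The plan is to show that $D := \mu^{-1}(\alpha)$ is the class of an irreducible $(-2)$-curve; once this is established, the statement will follow from the standard duality between the nef cone $\Nef(X)$ and the cone of effective curves $\overline{\mathrm{NE}}(X)$. First I would record the numerical properties of $D$. Since $(X,C,\mu)$ is marked $L$-polarized and $\alpha \in \Phi^+_L$, the class $D$ is effective. As $\alpha \in L \subseteq f_d^{\perp}$ we have $\alpha.f_d = 0$, so that $D.C = D.(-K_X) = 0$ and $D^2 = \alpha^2 = -2$.

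Next I would decompose $D$ into irreducible components and identify them. Writing $D = \sum_i m_i \Gamma_i$ with $m_i > 0$ and $\Gamma_i$ irreducible, nefness of $-K_X$ gives $(-K_X).\Gamma_i \geq 0$ for each $i$, while $\sum_i m_i\, (-K_X).\Gamma_i = (-K_X).D = 0$; hence $(-K_X).\Gamma_i = 0$ for every $i$. Because $-K_X$ is nef \emph{and big}, the Hodge index theorem shows its orthogonal complement in $\Pic(X)\otimes\bR$ is negative definite, so $\Gamma_i^2 < 0$; combined with the adjunction formula $\Gamma_i^2 = 2p_a(\Gamma_i) - 2 \geq -2$ (using $\Gamma_i.K_X = 0$), this forces $\Gamma_i^2 = -2$ and $p_a(\Gamma_i)=0$. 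Thus each $\Gamma_i$ is an irreducible $(-2)$-curve, and $\mu([\Gamma_i])$ is an effective root in $f_d^{\perp}$.

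The crux, and what I expect to be the main obstacle, is to upgrade this to \emph{irreducibility} of $D$. By the classical theory of weak del Pezzo surfaces (Demazure), the classes of irreducible $(-2)$-curves on $X$ are the simple roots of the root system of effective $(-2)$-classes, and every effective $(-2)$-class is a non-negative integral combination of them; the displayed decomposition realizes $\alpha = \sum_i m_i\,\mu([\Gamma_i])$ as such a combination. To conclude that the sum has a single term of multiplicity one, I would invoke that $\alpha$ is simple: provided the only effective $(-2)$-classes on $X$ are those in $\mu^{-1}(R_L)$, the simple roots of the curve root system coincide with the simple roots of $\Phi^+_L$, and a simple root admits no nontrivial such decomposition, so $D = \Gamma_i$ is irreducible. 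This is genuinely the delicate point: for special $X$ carrying additional effective $(-2)$-classes (\emph{enhancements} of $L$, in the language of Section \ref{sec:ratellpol}), a root simple in $R_L$ may fail to be simple in the larger system, in which case $D$ is reducible and $D^{\perp}\cap\Nef(X)$ acquires higher codimension. One therefore has to restrict to $X$ generic in its polarization class, or else impose the $\calC^{++}$-type constraint flagged in Remark \ref{rem:C++}, which is exactly the ingredient needed to pin down the effective cone.

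Finally I would assemble the conclusion. An irreducible curve of negative self-intersection spans an extremal ray of $\overline{\mathrm{NE}}(X)$: if $[D] = x + y$ with $x,y \in \overline{\mathrm{NE}}(X)$, then $D^2 < 0$ forces $D.x < 0$ (say), so any effective representative of $x$ is supported on $D$ and $x \in \bR_{\geq 0}[D]$, whence $y \in \bR_{\geq 0}[D]$ as well. Since $\overline{\mathrm{NE}}(X)$ is rational polyhedral by \cite[Proposition 6.2]{dhmq} and dual to $\Nef(X)$, this extremal ray corresponds under the duality to the facet $D^{\perp} \cap \Nef(X)$, a face of codimension $1$. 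As $-K_X$ is nef we have $[C] = [-K_X] \in \Nef(X)$, and $(-K_X).D = 0$ places the ray $\bR_{\geq 0}[C]$ on this facet, as required.
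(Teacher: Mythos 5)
Your proposal follows essentially the same route as the paper's proof: identify $\mu^{-1}(\alpha)$ as the class of an irreducible rational $(-2)$-curve, observe that such a curve spans an extremal ray of the effective cone inside $[-K_X]^{\perp}$ (the paper, like you, invokes the proof of \cite[Proposition 6.2]{dhmq} here), and conclude by cone duality. Your preliminary analysis --- decomposing $D = \mu^{-1}(\alpha)$ into components $\Gamma_i$, using nefness of $-K_X$ to force $K_X.\Gamma_i = 0$, then Hodge index plus adjunction to get $\Gamma_i^2 = -2$ and $p_a(\Gamma_i) = 0$ --- is a fleshed-out version of what the paper compresses into the single sentence ``Adjunction shows that $\mu^{-1}(\alpha)$ is the class of an irreducible rational $(-2)$-curve.''

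The ``delicate point'' you isolate is genuine, and it is a gap in the paper's own proof, not a defect peculiar to your write-up. Adjunction yields that every \emph{component} of $D$ is an irreducible $(-2)$-curve, but irreducibility of $D$ itself does not follow from simplicity of $\alpha$ in $\Phi_L^+$ alone: simplicity excludes decompositions of $\alpha$ into positive roots of $R_L$, while the classes $[\Gamma_i]$ may be effective roots lying outside $\mu^{-1}(L)$. Concretely, take $X$ of degree $6$ carrying a chain of two $(-2)$-curves $\Gamma_1,\Gamma_2$ with $\Gamma_1.\Gamma_2 = 1$ (an $A_2$ configuration), and let $L = \bZ\alpha$ where $\mu^{-1}(\alpha) = [\Gamma_1] + [\Gamma_2]$ is the highest root of that $A_2$; then $\alpha$ is simple in $\Phi_L^+ = \{\alpha\}$ and effective, yet any nef class orthogonal to $\Gamma_1 + \Gamma_2$ is orthogonal to each $\Gamma_i$ separately, so $(\mu^{-1}(\alpha))^{\perp} \cap \Nef(X) = \Gamma_1^{\perp} \cap \Gamma_2^{\perp} \cap \Nef(X)$ has codimension at least $2$. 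Thus the statement really requires the genericity hypothesis (all irreducible $(-2)$-classes lie in $\mu^{-1}(\Phi_L^+)$), which the paper only introduces afterwards for Proposition \ref{prop:dPKahler}, or a constraint of the kind flagged in Remark \ref{rem:C++}. Your explicit identification of this, together with your conditional completion of the argument under genericity and the correct final step (an irreducible negative curve spans an extremal ray of the rational polyhedral cone $\overline{\mathrm{NE}}(X)$, dual to a facet of $\Nef(X)$ containing $[C]$ since $C.D = 0$), makes your treatment more careful than the paper's.
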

\begin{proof} Let $\alpha \in \Phi_L^+$ be a simple root. Adjunction shows that $\mu^{-1}(\alpha)$ is the class of an irreducible rational $(-2)$-curve in $X$. It then follows from the proof of \cite[Proposition 6.2]{dhmq} that $\mu^{-1}(\alpha)$ is an extremal ray in the intersection of the effective cone of $X$ with the hyperplane $[-K_X]^{\perp}$. Since $\Nef(X)$ is the dual of the effective cone of $X$, we see that $(\mu^{-1}(\alpha))^{\perp}$ is a codimension $1$ face of $\Nef(X)$ which contains the ray generated by $[-K_X]$.
\end{proof}

With an additional assumption, this enables us to completely describe the structure of $\Nef(X)$ in a neighbourhood of the ray generated by $[C] = [-K_X]$. We say that a marked $L$-polarized weak del Pezzo pair $(X,C,\mu)$ is \emph{generic} if $\mu^{-1}(\Phi_L^+)$ contains the classes of all irreducible rational $(-2)$-curves on $X$.

\begin{proposition} \label{prop:dPKahler} Let $(X,C,\mu)$ be a generic marked $L$-polarized weak del Pezzo pair of degree $d$. Then in a neighbourhood of the ray generated by $[C]=[-K_X]$, the codimension $1$ faces of $\Nef(X)$ are given by the hyperplanes $(\mu^{-1}(\alpha))^{\perp}$, where $\alpha \in \Phi_L^+$ is a simple root.
\end{proposition}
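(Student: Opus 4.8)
The plan is to exploit the duality between $\Nef(X)$ and the cone of effective curves $\overline{NE}(X)$, reducing the problem to an analysis of the single face $N := \overline{NE}(X) \cap (-K_X)^{\perp}$. Since $\overline{NE}(X)$ is rational polyhedral by \cite[Proposition 6.2]{dhmq}, the codimension $1$ faces of the dual cone $\Nef(X)$ are precisely the sets $v^{\perp} \cap \Nef(X)$ for $v$ an extremal generator of $\overline{NE}(X)$. First I would observe that, for a point $p$ on the ray generated by $[-K_X]$, such a facet passes through $p$ exactly when $v.[-K_X] = 0$, i.e. when $v$ generates an extremal ray of $N$; and that in a sufficiently small neighbourhood of $p$ these are the only facets visible, since every other facet inequality is strict at $p$ and hence on a neighbourhood. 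It therefore suffices to show that the extremal rays of $N$ are precisely the classes $\mu^{-1}(\alpha)$ with $\alpha \in \Phi_L^+$ simple.

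Second, I would identify the generators of $N$. As the extremal rays of the rational polyhedral cone $\overline{NE}(X)$ are generated by classes of irreducible curves, $N$ is generated by the classes of irreducible curves $\Gamma$ with $-K_X.\Gamma = 0$. For such a $\Gamma$, adjunction gives $\Gamma^2 = 2p_a(\Gamma) - 2 \geq -2$, while the Hodge index theorem applied to the big and nef class $-K_X$ (of positive square $(-K_X)^2 = d$) forces the negative definiteness of $(-K_X)^{\perp}$, so $\Gamma^2 < 0$. Hence $\Gamma^2 = -2$ and $\Gamma$ is an irreducible rational $(-2)$-curve, and $N$ is exactly the cone generated by the classes of all such curves on $X$.

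Third, I would bring in the genericity hypothesis together with the root theory of Section \ref{sec:lattices}. By genericity, every irreducible rational $(-2)$-curve class lies in $\mu^{-1}(\Phi_L^+)$, so $N$ is contained in the cone generated by $\mu^{-1}(\Phi_L^+)$. Conversely, each $\mu^{-1}(\alpha)$ with $\alpha \in \Phi_L^+$ is effective (by definition of the polarization) and satisfies $\mu^{-1}(\alpha).(-K_X) = \mu^{-1}(\alpha).C = \alpha.f_d = 0$, so it lies in $N$; this gives the reverse inclusion and hence $N = \mathrm{cone}(\mu^{-1}(\Phi_L^+))$. Since every positive root is a non-negative integer combination of the linearly independent simple roots, this cone equals the simplicial cone on the classes $\mu^{-1}(\alpha)$ with $\alpha$ simple, whose extremal rays are exactly those classes. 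Dualizing, the facets of $\Nef(X)$ through $[-K_X]$ are the hyperplanes $(\mu^{-1}(\alpha))^{\perp}$ for $\alpha$ simple, which combined with the first step yields the Proposition. (This is consistent with, and completes the converse to, the previous Proposition, which already exhibited each such hyperplane as a facet.)

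The hard part will be the careful bookkeeping in the second and third steps: namely, verifying that $N$ is genuinely \emph{generated} by irreducible $(-2)$-curve classes rather than merely containing them, and that the passage from the cone on all of $\Phi_L^+$ to the simplicial cone on the simple roots correctly pins down the extremal rays. By contrast, the local neighbourhood argument of the first step is routine once rational polyhedrality of $\overline{NE}(X)$ is invoked.
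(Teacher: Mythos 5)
Your proof is correct, and its skeleton---dualizing $\Nef(X)$ against the effective cone, identifying the extremal rays of $N = \overline{NE}(X) \cap (-K_X)^{\perp}$ with irreducible rational $(-2)$-curve classes, and then invoking genericity and the theory of simple roots---is the same as the paper's. The difference lies in how the key inputs are obtained. The paper's proof is very short because it cites the proof of \cite[Proposition 6.2]{dhmq} as a black box for the statement that the codimension $1$ faces near the ray of $[-K_X]$ are orthogonal to classes of irreducible rational $(-2)$-curves, dispatches the last step with the one-line remark that ``irreducibility implies that such classes must come from simple roots,'' and leans on the preceding unnumbered proposition for the forward inclusion. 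You instead rederive the geometric input from scratch: the facet/extremal-ray duality for full-dimensional rational polyhedral cones, adjunction plus the Hodge index theorem (negative definiteness of $(-K_X)^{\perp}$, since $(-K_X)^2 = d > 0$) to pin down the $(-2)$-curves, and the identity $N = \mathrm{cone}(\mu^{-1}(\Phi_L^+)) = \mathrm{cone}(\mu^{-1}(\alpha) : \alpha \text{ simple})$ to extract the extremal rays. This buys self-containedness and, in particular, makes rigorous the step the paper glosses: an irreducible curve class among the positive roots must be simple not because of irreducibility per se, but because a non-simple positive root is a sum of two effective classes and hence cannot span an extremal ray---which is exactly what your simplicial-cone argument encodes.
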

\begin{proof} It remains to show that every codimension $1$ face in a neighbourhood of the ray generated by $[-K_X]$ arises from a simple root in $\Phi_L^+$. The proof of \cite[Proposition 6.2]{dhmq} shows that such faces are orthogonal to the classes of irreducible rational $(-2)$-curves in $\NS(X)$ and, by genericity, all such classes are contained in $\mu^{-1}(\Phi_L^+)$. Irreducibility implies that such classes must come from simple roots.
\end{proof}

\section{Mirror symmetry for lattice polarized weak del Pezzo pairs}\label{sec:mirror}

Now we come to the main section of this paper. We claim that mirror symmetry for del Pezzo surfaces can be extended to the lattice polarized setting.

First we review classical mirror symmetry for del Pezzo surfaces, as given by the Fano-LG correspondence. Using techniques of homological mirror symmetry, Auroux, Katzarkov, and Orlov \cite{msdpsvccs} have proposed that the mirror to a weak del Pezzo pair of degree $d$ should be given by an elliptic fibration over $\bC$ with $12-d$ singular fibres, each of type $I_1$, which admits a compactification to a rational elliptic surface of type $d$. In our framework, this should be thought of as analogous to the statement ``the topological mirror to a K3 surface is a K3 surface''. As in the K3 case, we claim that we can gain a significantly richer understanding by studying the effect of mirror symmetry on lattice polarizations.

The mirror construction for lattice polarized weak del Pezzo pairs proceeds as follows. Let $(X,C)$ be a weak del Pezzo pair of degree $d$ that is polarized by a lattice $L \subset f_d^{\perp}$. Let $\check{L}:=(L)^{\perp}_{f_d^{\perp}}$ denote the orthogonal complement of $L$ in $f_d^{\perp} \cong F_d^{\perp}/\langle f_0 \rangle$; note that $\check{L}$ is negative definite. 

Now, note that the lattice $F_d^{\perp}$ (as given in Table \ref{tab:ellipticlatticetype}) contains the lattice $f_d^{\perp}$ (as given in Table \ref{tab:latticetype}) as a sublattice. So we may choose an embedding $f_d^{\perp} \hookrightarrow F_d^{\perp}$, thereby identifying $\check{L}$ with a negative definite sublattice of $F_d^{\perp}$. Then we have the following mirror conjecture.

\begin{conjecture}\label{conj:mirror} The mirror to an $L$-polarized weak del Pezzo pair $(X,C)$ of degree $d$ is given by the open set $Y \setminus D$ in an $\check{L}$-polarized rational elliptic surface $(Y,D)$ of type $d$.
\end{conjecture}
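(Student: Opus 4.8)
Since this is a mirror \emph{conjecture}, the goal of a proof at this stage is to justify the correspondence by matching the two sides at the level of lattices, moduli, and cone structure, as promised in the introduction; a fully rigorous statement lies deeper. The plan is first to pin down the correspondence purely lattice-theoretically. An $L$-polarization of $(X,C)$ fixes a primitive negative definite sublattice $L \subset f_d^{\perp}$, and I would define the mirror polarizing lattice to be $\check{L} := L^{\perp}_{f_d^{\perp}}$, transported into $F_d^{\perp}$ via the chosen embedding $f_d^{\perp} \hookrightarrow F_d^{\perp}$ supplied by the identification $f_d^{\perp} \cong F_d^{\perp}/\langle f_0 \rangle$. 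The steps here are: check that this assignment is independent of the embedding up to the relevant lattice automorphisms; verify that it is involutive, so the mirror of the mirror recovers $L$; and record the rank identity $\operatorname{rank}(L) + \operatorname{rank}(\check{L}) = 9-d$, using non-degeneracy of $f_d^{\perp}$. Considerable care is needed over the distinction between a lattice and its root sublattice: by Remark \ref{rem:ellipticroot} the polarization data genuinely depends only on $R_L$, yet $\check{L} = L^{\perp}$ of a root lattice need \emph{not} itself be root-generated (already for $d = 7$, where $f_d^{\perp}$ is not a root lattice), so one cannot restrict to root lattices on both sides simultaneously. This is precisely why the general (non-root) lattices are retained, and reconciling the involutive correspondence on full lattices with the root-sublattice data is the first technical point to settle.

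Next I would verify the fundamental mirror-symmetric exchange of Kähler and complex moduli. On the elliptic side, the complex moduli of a $\check{L}$-polarized rational elliptic surface of type $d$ is the period domain $\calP_{d,\check{L}} = \Hom(F_d^{\perp}/\langle R_{\check{L}}, f_0 \rangle, \bG_m)$, of dimension $9 - d - \operatorname{rank}(R_{\check{L}})$, by Theorem \ref{thm:ratellmoduli} and the surrounding discussion. I would compare this against the (complexified) Kähler moduli of the $L$-polarized weak del Pezzo pair near the ray $[-K_X]$: by Proposition \ref{prop:dPKahler} the codimension-one faces of $\Nef(X)$ there are cut out exactly by the simple roots of $\Phi_L^+$, so the relevant Kähler directions are governed by $R_L$ and have dimension $\operatorname{rank}(R_L)$. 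The target is the equality of these two numbers, which (when both lattices are root-generated) follows from the rank identity of the first step, but which in general requires tracking the corank of the roots inside $L$ and $\check{L}$ against each other. Getting this bookkeeping right is the crux of the numerical match.

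The heart of the justification is to match not merely dimensions but the full stratified structure. On the elliptic side, $\calP_{d,\check{L}}$ carries the stratification \eqref{eq:strata}, indexed by overlattices obtained by adjoining enhancement roots from $\Psi_{d,\check{L}}$, each wall recording a root that becomes effective. On the del Pezzo side, the face structure of $\Nef(X)$ near $[-K_X]$ is indexed by roots of $\Phi_L^+$ that become classes of irreducible $(-2)$-curves. I would construct a dictionary sending a simple root $\alpha \in \Phi_L^+$ (a wall of the Kähler cone of $X$) to the enhancement of $\check{L}$ by the corresponding root of $\Psi_{d,\check{L}}$ (a wall in the complex moduli of $Y$), and check that it respects the partial orders on faces and strata. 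As an independent consistency check I would confirm the numerical prediction of \cite{msdpsvccs}: the rational elliptic surface $Y$ has $\chi_{\mathrm{top}} = 12$, the fibre $D$ of type $\mathrm{I}_d$ contributes Euler number $d$, and so the open surface $Y \setminus D$ carries exactly $12 - d$ further singular fibres, all of type $\mathrm{I}_1$ in the generic case, precisely matching the expected Landau-Ginzburg model.

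The main obstacle is that the preceding steps establish only a consistency of moduli and cone data, not a genuine geometric equivalence. A true proof requires an honest homological mirror symmetry statement: an equivalence between the derived category of $(X,C)$ carrying its $L$-polarization and the Fukaya--Seidel category of the Landau-Ginzburg model $Y \setminus D$ carrying its $\check{L}$-polarization, compatible with the identification of Kähler and deformation parameters across the entire family rather than at a single point. Establishing such an equivalence in the lattice-polarized setting, and reconciling it with the deformation ambiguity already noted in the comparison with \cite{msdpsvccs} (different polarizations yield different singular-fibre configurations, all deformation-equivalent to the \cite{msdpsvccs} model), is exactly what lies beyond the reach of the lattice-theoretic and period-map methods available here, and is the reason the statement is posed as a conjecture.
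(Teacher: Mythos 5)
Your overall route is the one the paper itself takes to justify Conjecture \ref{conj:mirror}: define $\check{L} := L^{\perp}_{f_d^{\perp}}$, transport it into $F_d^{\perp}$ via a chosen embedding $f_d^{\perp} \hookrightarrow F_d^{\perp}$, and then support the conjecture by comparing the face structure of $\Nef(X)$ near $[C]$ (via Proposition \ref{prop:dPKahler}) with the stratification \eqref{eq:strata} of the mirror period domain, together with the consistency check against \cite{msdpsvccs} (the count of $12-d$ fibres of type $\mathrm{I}_1$). This is essentially Section \ref{sec:complexkahler}.

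However, the step you call ``the crux of the numerical match'' is wrong, and not merely in need of bookkeeping. You propose to prove $\operatorname{rank}(R_L) = \dim \calP_{d,\check{L}} = 9-d-\operatorname{rank}(R_{\check{L}})$. This fails exactly in the non-root-generated cases you flag: for $d=7$ with $L = f_7^{\perp}$, the Gram matrix $\left(\begin{smallmatrix} -2 & 1 \\ 1 & -4 \end{smallmatrix}\right)$ admits only a single pair of roots, so $\operatorname{rank}(R_L)=1$, while $\check{L}=0$ makes the right-hand side $2$; for $d=8$, $f_8^{\perp} = (-8)$ contains no roots at all, so the left side is $0$ and the right side is $1$. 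No tracking of coranks repairs this, because the equality is not the correct mirror statement. The paper does two things differently. First, it restricts the moduli comparison to $L = f_d^{\perp}$ with $(X,C,\mu)$ generic precisely so that $\check{L}=0$ and $\calP_{d,\check{L}}$ attains the full dimension $9-d$, which is what matches the dimension $10-d$ of the K\"{a}hler cone (up to the overall scaling direction); for smaller $L$ the K\"{a}hler cone still has dimension $10-d$ while the mirror period domain shrinks, so no dimension match of the kind you want exists, and the paper accordingly never asserts one. Second, the substantive correspondence is not an equality of numbers but a bijection: codimension-$1$ faces of $\Nef(X)$ through $[C]$ and codimension-$1$ strata of $\calP_{d,0}$ adjacent to the distinguished $1$-dimensional stratum $U_{d,L}$ are both indexed by the simple roots of $\Phi_L^+$, and this bijection survives in the cases above (vacuously for $d=8$, where Section \ref{sec:8} shows the degenerate fibre is of type $\mathrm{II}$ and gives no lattice enhancement). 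Finally, the paper does not treat homological mirror symmetry as the missing proof mechanism: its further evidence consists of the K3 compatibility results of Section \ref{sect:K3} and the Gross--Siebert computations of Section \ref{sec:gs}, while Remark \ref{rem:ako} positions the HMS results of \cite{msdpsvccs} as agreeing with the conjecture only up to deformation.
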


\begin{remark} The choice of the embedding $f_d^{\perp} \hookrightarrow F_d^{\perp}$ in the formulation of this conjecture should be thought of as analogous to the choice of a maximally unipotent monodromy point in the usual formulation of mirror symmetry; see Remark \ref{rem:choice}.
\end{remark}

\begin{remark} At this point, we briefly remark on what we would expect to happen if we had defined our weak del Pezzo pairs $(X,C)$ to have $C$ singular, rather than smooth. In this case, the philosophy of mirror symmetry suggests that for each nodal singularity appearing in $C$, one should remove a section from the mirror rational elliptic surface $(Y,D)$, making it into a fibration by punctured elliptic curves. For instance, in the degree $9$ case the pair $(X,C)$ consists of a smooth cubic in $\bP^2$, which may degenerate to contain one, two, or three nodes. On the mirror side, these degenerations correspond to removing one, two, or three sections, respectively, from a rational elliptic surface of type $9$; note that such a rational elliptic surface has Mordell-Weil group $\bZ/3\bZ$ \cite{mwlres}. This idea will reappear in Section \ref{sec:gs}, where we will use it to simplify the computation of Gross-Siebert mirrors.
\end{remark}

\subsection{Complex and K\"{a}hler Moduli}\label{sec:complexkahler}

As a first piece of evidence for this conjecture, we discuss the correspondence between the complex moduli of a lattice polarized rational elliptic surface of type $d$, as discussed in Section \ref{sec:ratellpol}, and the K\"{a}hler cone of a lattice polarized weak del Pezzo pair, as discussed in Section \ref{sec:dPpol}.

Begin by letting $(X,C)$ be a weak del Pezzo pair of degree $d$. Since $b_2(X) = 10-d$, the K\"{a}hler cone of $X$ must have dimension $10 - d$. Thus, in order to have any hope of making a mirror symmetric statement relating the boundary components of the K\"{a}hler cone of $X$ to the strata in the period domain of its mirror, we will need to choose our polarizing lattice $L$ so that the period domain for $\check{L}$-polarized rational elliptic surfaces of type $d$ has dimension $9 - d$. This happens if and only if the root sublattice $R_{\check{L}}$ is trivial.

To ensure this is that case, choose $L$ to equal the lattice $f_d^{\perp}$ from Table \ref{tab:latticetype} and assume that $(X,C)$ is a generic marked $L$-polarized weak del Pezzo pair of degree $d$. In this setting we have a description of the boundary of the nef cone of $X$ (which, we recall, is the closure of the K\"{a}hler cone of $X$) in a neighbourhood of the ray generated by $[C] = [-K_X]$, given by Proposition \ref{prop:dPKahler}; recall that codimension $1$ faces correspond to simple roots in $\Phi_L^+ \subset L$.

Now we proceed to the mirror. We have $\check{L} = 0$, as required, and from Equation \eqref{eq:strata} we obtain a stratification of the period domain $\calP_{d,0}$. We may describe some of the strata in this stratification as follows. Recall that in the process of defining a mirror, we chose an embedding $f_d^{\perp} \hookrightarrow F_d^{\perp}$. This embedding allows us to identify $L$ as a corank $1$ sublattice of $F_d^{\perp}$, which gives rise to a $1$-dimensional stratum $U_{d,L}$ in $\calP_{d,0}$. In a neighbourhood of this stratum, the strata in $\calP_{d,0}$ are given by lattices $L' \subset L$ which are generated by roots in $\Phi_L^+ \subset L$.

We thus obtain a correspondence between
\begin{itemize}
\item boundary components of the nef cone of $X$ in a neighbourhood of the ray generated by $[C] = [-K_X]$, and
\item strata of  $\calP_{d,0}$ in a neighbourhood of the $1$-dimensional stratum $U_{d,L}$, corresponding to sublattices $L' \subset L$ which are generated by simple roots in $\Phi_L^+ \subset L$.
\end{itemize}
This should be thought of as a mirror correspondence; we will make some attempt to formalize it in Section \ref{sec:GSconjecture} using the Gross-Siebert program.
\medskip

It is interesting to consider whether there is a similar correspondence relating the nef cone of a rational elliptic surface of type $d$ to a moduli space for weak del Pezzo pairs of degree $d$. This question is significantly more difficult, for several reasons.

The nef cone of a rational elliptic surface $Y$ has been widely studied. Borcea \cite[Section 6]{dhmq} showed that the extremal rays generating the effective cone (which is dual to the nef cone) of $Y$ may accumulate to the ray generated by $[-K_Y]$. If this happens, the nef cone of $Y$ will not be rational polyhedral in a neighbourhood of the ray generated by $[-K_Y]$; by work of Totaro \cite[Theorem 8.2]{h14pffccc}, this occurs if and only if the Mordell-Weil group of $Y$ is infinite. The Mordell-Weil groups of all rational elliptic surfaces were computed explicitly by Oguiso and Shioda in \cite{mwlres}; they may be infinite even in cases admitting polarizations by one of the lattices from Table \ref{tab:latticetype}. Due to this, it seems quite unlikely that a direct analogue of Proposition \ref{prop:dPKahler} will hold for rational elliptic surfaces of type $d$. 

On the other side, moduli spaces of del Pezzo surfaces have also been widely studied, notably by Colombo, van Geeman, and Looijenga \cite{dpmrs}, and by Hacking, Keel, and Tevelev \cite{sptlccmsdps}. However, for our purposes the correct objects to study should be \emph{weak del Pezzo pairs} $(X,C)$, where $X$ is a weak del Pezzo surface and $C$ is a smooth anticanonical divisor on $X$. The moduli of such pairs has been studied by Friedman \cite{mhsov} and McMullen \cite{dbpp} but, to our knowledge, a complete description in terms of periods (analogous to that in \cite{rsac,msacc}) has not been worked out.

That said, we do expect the moduli spaces of weak del Pezzo pairs to admit such a description. Fix a smooth elliptic curve $C$ and let $(X,C,\mu)$ denote a marked weak del Pezzo pair of degree $d$ whose anticanonical curve is isomorphic to $C$. Define the \emph{period point} $\phi \in \Hom(f_d^{\perp},C)$ associated to $(X,C,\mu)$ by
\begin{align*}
\phi\colon f_d^{\perp} &\longrightarrow \Pic^0(C) \cong C \\
\alpha &\longmapsto \mu^{-1}(\alpha)|_C.
\end{align*}
McMullen \cite[Corollary 4.4 and Theorem 6.4]{dbpp} has shown that $(X,C,\mu)$ is determined up to isomorphism by its period point $\phi$, and surjectivity of the period map follows easily from his results. However, in order to prove a global Torelli theorem for marked weak del Pezzo pairs, and hence relate their moduli space to the period domain $\Hom(f_d^{\perp},C)$, one needs better control over the action of the automorphism group of $\Lambda_d$ on the markings. We expect that this will require additional conditions in the definition of a marking; see Remark \ref{rem:C++}.

Based on these partial results, and the results of Section \ref{sec:ratellpol}, we make the following conjecture about the moduli spaces of marked lattice polarized weak del Pezzo pairs.

\begin{conjecture} \label{conj:dPmoduli} Fix a smooth elliptic curve $C$. There is a period map from the moduli space of marked $L$-polarized weak del Pezzo pairs $(X,C,\mu)$ of degree $d$, with anticanonical curve $C$, to $\Hom(f_d^{\perp}/R_L,C)$, and this period map is an isomorphism over a dense open set $U \subset \Hom(f_d^{\perp}/R_L,C)$.
\end{conjecture}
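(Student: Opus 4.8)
The plan is to transport the argument behind Theorem \ref{thm:ratellmoduli} across the mirror, replacing the Gross--Hacking--Keel period theory of \cite{msacc} with McMullen's period results for blowups of $\bP^2$. First I would check that the period map has the stated target. This is the exact analogue of the first half of Lemma \ref{lem:periods}: if $\alpha \in \Phi_L^+$ is a positive root of a marked $L$-polarized pair $(X,C,\mu)$, then $\mu^{-1}(\alpha)$ is effective with $\mu^{-1}(\alpha).C = 0$, so $\mu^{-1}(\alpha)|_C \cong \calO_C$ and hence $\phi(\alpha)$ is the identity of $\Pic^0(C)$. Since the simple roots form a basis of $R_L$ and every positive root is a nonnegative combination of them, $\phi$ vanishes on all of $R_L$; thus $\phi$ descends to $\Hom(f_d^{\perp}/R_L, C)$, giving a well-defined period map on the moduli space of marked $L$-polarized weak del Pezzo pairs.

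Next I would establish the underlying bijection of points. Injectivity is precisely McMullen's Torelli statement \cite[Corollary 4.4 and Theorem 6.4]{dbpp}: a marked weak del Pezzo pair with fixed anticanonical curve $C$ is recovered from its period point in $\Hom(f_d^{\perp},C)$, and restricting to the slice annihilating $R_L$ yields injectivity into $\Hom(f_d^{\perp}/R_L,C)$. Surjectivity over a dense open set $U$ follows from the surjectivity half of his results: one realises a prescribed $\phi$ by blowing up $\bP^2$ at the $(9-d)$ points of $C$ determined by $\phi$, and takes $U$ to be the locus where these points lie in almost general position and impose no $(-2)$-curves beyond those forced by $R_L$. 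On $U$ the effective $(-2)$-classes orthogonal to $C$ are exactly $\mu^{-1}(R_L)$, so the pair is generic in the sense preceding Proposition \ref{prop:dPKahler}.

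The hard part, and the reason this remains a conjecture, is upgrading this pointwise bijection to an isomorphism of moduli spaces, which forces control over the action of the automorphism group of $\Lambda_d$ on markings. As flagged in Remark \ref{rem:C++}, Definition \ref{defn:dPmarking} is too weak: it permits isometries of $\Lambda_d$ fixing $f_d$ that are not induced by any deformation of $(X,C)$, and these would collapse distinct points of $\Hom(f_d^{\perp}/R_L,C)$ or render the moduli space non-separated. I would remedy this by augmenting the marking with a positivity requirement analogous to the cone condition on $\calC^{++}_Y$ appearing in Definition \ref{def:marking}, using the nef cone $\Nef(X)$ in place of $\calC^{++}_Y$ so that the marking pins down a fixed chamber of the positive cone and thereby kills the residual Weyl-group ambiguity. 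With this corrected definition one would argue, following \cite[Section 6]{msacc}, that the remaining automorphisms act freely and that the period map is étale, hence an isomorphism over $U$.

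Finally, away from $U$ the picture should stratify exactly as in Equation \eqref{eq:strata}: over the locus where a root in $f_d^{\perp}\setminus R_L$ becomes effective, extra $(-2)$-curves appear, the surface acquires a larger polarizing lattice $L' \supset L$, and injectivity fails. A fully global statement would require assembling these strata into an object mirroring $\tilde{\calP}_{d,L}$, but since the conjecture asks only for an isomorphism over a dense open set, this refinement can be set aside; the genuine obstacle remains the deformation-theoretic control of automorphisms needed to convert McMullen's point-set Torelli theorem into a statement about the moduli stack, which is precisely the ingredient that is not yet available.
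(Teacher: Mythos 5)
This statement is a conjecture in the paper: no proof is given, only the surrounding discussion, which cites McMullen's results \cite[Corollary 4.4 and Theorem 6.4]{dbpp} for the pointwise Torelli statement and surjectivity of the period map, and Remark \ref{rem:C++} flagging that Definition \ref{defn:dPmarking} must be strengthened by a cone condition analogous to that on $\calC^{++}_d$ before a global Torelli theorem can be established. Your proposal reproduces exactly this reasoning — the descent of the period map through $R_L$ (the del Pezzo analogue of Lemma \ref{lem:periods}), the appeal to McMullen for injectivity and surjectivity over a dense open locus, the proposed strengthening of the marking via a positivity/chamber condition, and the stratification picture mirroring Equation \eqref{eq:strata} — and honestly concludes that controlling the action of the automorphism group of $\Lambda_d$ on markings is the missing ingredient. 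So your attempt matches the paper's own partial justification and correctly identifies why the statement remains a conjecture rather than a theorem; it should not be presented as a proof, and to the paper's authors' knowledge no such proof yet exists.
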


\begin{remark} \label{rem:ako} We briefly remark  upon the relationship between the ideas in this section and the results of \cite{msdpsvccs}. Note that, in contrast to the discussion above, the authors of \cite{msdpsvccs} are primarily concerned with comparing the complex structure on $(X,C)$ with the symplectic structure on $Y \setminus D$. This allows them to largely ignore the complex structure of $Y \setminus D$; in particular, they make the assumption that the critical points of the elliptic fibration on $Y \setminus D$ are all isolated and non-degenerate, so all singular fibres have Kodaira type $\mathrm{I}_1$. This assumption is required by Seidel's \cite{vcm} definition of the derived category of Lagrangian vanishing cycles, which is needed in homological mirror symmetry.

We claim that the formulation of mirror symmetry thus obtained agrees with our conjecture \emph{up to deformation}. In our setting, different choices of lattice polarization on $(X,C)$ will lead to different configurations of singular fibres on $Y \setminus D$, but all configurations may be deformed to the mirror as described by \cite{msdpsvccs}. It would be interesting to see whether homological mirror symmetry statements, such as those in \cite{msdpsvccs}, could also be made in cases where $Y \setminus D$ has more severe singular fibres.
\end{remark}

\section{Compatibility with Dolgachev-Nikulin-Pinkham mirror symmetry for K3 surfaces} \label{sect:K3}

In the remainder of this paper, we discuss the compatibility of the mirror construction from Section \ref{sec:mirror} with various other forms of mirror symmetry. We begin by analysing how it fits with Dolgachev-Nikulin-Pinkham mirror symmetry for lattice polarized K3 surfaces \cite{sedeask3,fagkk3s,iqfaag,mslpk3s}. The basic idea behind this correspondence is set out in \cite[Section 4]{mstdfcym}; this section provides a much more detailed picture.

We begin with some setup. We first describe how to endow a family of K3 surfaces with a lattice polarization, following \cite{flpk3sm}. Suppose that $\pi\colon \calV^* \to \Delta^*$ is a smooth family of K3 surfaces over the punctured unit disc $\Delta^* \subset \bC$. Let $V_p$ denote the fibre of $\calV$ over $p \in \Delta^*$ and let $\NS(V_p)$ denote its N\'{e}ron-Severi lattice. Finally, let $M \subset \Lambda_{\mathrm{K3}}$ be a primitive sublattice of the \emph{K3 lattice} $\Lambda_{\mathrm{K3}} := H^{\oplus 3} \oplus E_8^{\oplus 2}$, where $H = \mathrm{II}_{1,1}$ denotes the hyperbolic plane and $E_8$ is the root lattice corresponding to the Dynkin diagram $E_8$.

\begin{definition}\cite[Definition 2.1]{flpk3sm} \label{def:Lpol} $\pi\colon\calV^*\to \Delta^*$ is an \emph{$M$-polarized family of K3 surfaces} if
\begin{itemize}
\item there is a trivial local subsystem $\calM$ of $R^2\pi_*\bZ$ so that, for each $p \in \Delta^*$, the fibre $\calM_p \subset H^2(V_p,\bZ)$ of $\calM$ over $p$ is a primitive sublattice of $\NS(V_p)$ that is isomorphic to $M$, and
\item there is a line bundle $\calA$ on $\calV^*$ whose restriction $\calA_p$ to any fibre $V_p$ is ample with first Chern class $c_1(\calA_p)$ contained in $\calM_p$ and primitive in $\NS(V_p)$.
\end{itemize}
\end{definition}

Now, suppose that $\pi \colon \calV \to \Delta$ is a type II degeneration of K3 surfaces, where $\Delta \subset \bC$ denotes the unit disc. Recall that this means that
\begin{itemize}
\item $\calV$ is a nonsingular threefold with trivial canonical bundle;
\item the restriction of $\pi$ to $\Delta^*$ is a smooth morphism whose fibres are K3 surfaces; and
\item the central fibre $V_0$ of $\calV$ is a chain of smooth surfaces $X_1 \cup_{E_1} X_2 \cup_{E_2} \cdots \cup_{E_k} X_k$, where $X_1$ and $X_k$ are rational, $X_2,\ldots,X_{k-1}$ are elliptic ruled, and $X_i$ and $X_{i+1}$ meet normally along a smooth elliptic curve $E_i$. 
\end{itemize}

We will further assume that the restriction of $\pi$ to $\Delta^*$ is an $M$-polarized family of K3 surfaces, in the sense of Definition \ref{def:Lpol}, for some $M \subset \Lambda_{\mathrm{K3}}$. Moreover, so that we can make a comparison with the notion of mirror symmetry defined in Section \ref{sec:mirror}, we will assume that $X_1$ is a weak del Pezzo surface. By adjunction and the condition that $\omega_{\calV} \cong \calO_{\calV}$, we see that $E_1 \in |-K_{X_1}|$, so $(X_1,E_1)$ is a weak del Pezzo pair. Let $d$ denote its degree.  

\begin{example} A simple example of the type of degeneration discussed above is as follows. Consider the family
\[\calV := \{z^2 = (f_3(x_1,x_2,x_3))^2 + tg_6(x_1,x_2,x_3)\} \subset \WP(1,1,1,3)[x_1,x_2,x_3,z]\]
over the disc $\Delta$ with coordinate $t$, where $f_3$ and $g_6$ denote a generic cubic and sextic respectively. $\calV$ is singular at the $18$ points $f_3 = g_6 = z = t = 0$, but we may perform a simultaneous small resolution by blowing up the ideal $\langle t,z-f_3 \rangle$ in $\calV$ (this ideal defines a Weil divisor in $\calV$ that is not $\bQ$-Cartier). The result is a type II degeneration of K3 surfaces that is polarized by the lattice $\langle 2\rangle$. The central fibre has two components $X_1 \cup_{E_1} X_2$, and $(X_1,E_1)$ is a weak del Pezzo pair of degree $9$ (in this case, $X_2$ is a copy of $\bP^2$ blown up in $18$ points, so is not weak del Pezzo).

Several more complicated examples are described in \cite[Section 4]{mstdfcym}.
\end{example}

Now we begin to discuss mirror symmetry. Our type II degeneration corresponds to a $1$-dimensional cusp $\calC_1$ in the Baily-Borel compactification $\overline{\calD}_M$ of the period domain $\calD_M$ of $M$-polarized K3 surfaces. By the discussion in \cite[Section 2.1]{cmsak3s}, such cusps are in bijective correspondence with rank two isotropic sublattices of $M^{\perp}$. Denote the rank $2$ isotropic sublattice corresponding to $\calC_1$ by $I$.

In order for us to apply Dolgachev-Nikulin-Pinkham mirror symmetry, we must assume that $\calC_1$ contains a $0$-dimensional cusp (Type III point) $\calC_0$. Such cusps correspond to primitive isotropic vectors $e \in M^{\perp}$ (satisfying certain conditions), and the condition that $\calC_0 \subset \calC_1$ is equivalent to requiring that $e \in I$.

Under these conditions, the Dolgachev-Nikulin-Pinkham mirror to a general fibre of $\calV$ is a K3 surface $W$ polarized by the lattice
\[\check{M} := (\bZ e)^{\perp}_{M^{\perp}}\, / \, \bZ e.\]
Note that, by \cite[Proposition 4.1]{mstdfcym}, the fact that $e$ lies in the rank two isotropic lattice $I$ is equivalent to the existence of a primitive isotropic vector $f \in \check{M}$, which gives rise to an elliptic fibration on $W$. The vectors $e$ and $f$ span $I$.

\subsection{The relationship with mirror symmetry for weak del Pezzo pairs}

We begin by relating the lattice polarization on $\calV$ to one on the weak del Pezzo pair $(X_1,E_1)$. To do this, we use the Clemens-Schmid exact sequence of mixed Hodge structures. This exact sequence gives rise to an exact sequence of weight graded pieces
\begin{equation} \label{eq:cssequence} 0 \to \Gr_0(H^0_{\lim}) \to \Gr_{-4}(H_4(V_0)) \stackrel{\varphi}{\to} \Gr_2(H^2(V_0)) \stackrel{i^*}{\to} \Gr_2(H^2_{\lim}) \to 0,\end{equation}
where $H^k_{\lim}$ denotes the limiting mixed Hodge structure on the $k$th cohomology of $V_p$, for a general $p \in \Delta^*$, and $i^*$ denotes the natural map $H^2(V_0) \cong H^2(\calV) \to H^2(V_p)$ given by pulling-back by the inclusion $i\colon V_p \to \calV$.

We begin by examining these graded pieces in more detail. By \cite[Section 4]{csesa}, the weight filtration on $H^2_{\lim}$ is given by
\[\{0\} \subset I_{\bQ} \subset I^{\perp}_{\bQ} \subset H^2_{\lim} \cong H^2(V_p,\bQ),\]
where $I_{\bQ} := I \otimes \bQ$. Thus $\Gr_2(H^2_{\lim}) = I^{\perp}_{\bQ}/I_{\bQ}$. By construction, there is a natural injective map $M \hookrightarrow \Gr_2(H^2_{\lim})$.

 The weight filtration on $H^2(V_0)$ is given by Deligne's mixed Hodge structure on a normal crossing variety. We compute that
\[\Gr_2(H^2(V_0)) = \ker\left(\delta\colon \bigoplus_{i=1}^k H^2(X_i,\bQ) \to \bigoplus_{j=1}^{k-1} H^2(E_j,\bQ)\right),\]
where $\delta = (\delta_1,\ldots,\delta_{k-1})$ is given by the alternating sums $\delta_j:= \sum_{i=1}^k (-1)^i\delta_{ij}$ of the restriction maps $\delta_{ij}\colon H^2(X_i,\bQ) \to H^2(E_j,\bQ)$. This kernel may be identified with those classes $(D_1,\ldots,D_k) \in  \bigoplus_{i=1}^k H^2(X_i,\bQ)$ which satisfy the gluing condition $D_i|_{E_i} - D_{i+1}|_{E_i} = 0$ for all $i \in \{1,\ldots,k-1\}$.

\begin{lemma} \label{lemma:injection} The Clemens-Schmid exact sequence \eqref{eq:cssequence} induces an injective map $j \colon [E_1]^{\perp} \to \Gr_2(H^2_{\lim})$, where $[E_1]^{\perp}$ denotes  the orthogonal complement of $E_1$ in $H^2(X_1,\bZ) \cong \Lambda_d$.
\end{lemma}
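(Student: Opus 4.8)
The plan is to construct the map $j$ using the Clemens-Schmid sequence \eqref{eq:cssequence} and to verify injectivity by a careful analysis of the kernel of $i^*$. First I would recall the structure of $\Gr_2(H^2(V_0))$ computed above: its elements are tuples $(D_1,\dots,D_k)$ with $D_i \in H^2(X_i,\bQ)$ satisfying the gluing condition $D_i|_{E_i} = D_{i+1}|_{E_i}$. I would define a map $[E_1]^\perp \to \Gr_2(H^2(V_0))$ by sending a class $D_1 \in [E_1]^\perp \subset H^2(X_1,\bZ)$ to a tuple whose first component is $D_1$; since $D_1.E_1 = 0$ implies $D_1|_{E_1} \cong \calO_{E_1}$ is numerically trivial, one can extend $D_1$ across the chain by choosing the remaining components $D_2,\dots,D_k$ to satisfy the gluing conditions (the simplest choice being to propagate the trivial restriction, taking later components in the kernels of the relevant restriction maps). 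I would then set $j := i^* \circ (\text{this extension})$, landing in $\Gr_2(H^2_{\lim})$.

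The core of the argument is injectivity, and here the exactness of \eqref{eq:cssequence} is the essential tool. By exactness, $\ker(i^*) = \operatorname{im}(\varphi)$, where $\varphi$ comes from $\Gr_{-4}(H_4(V_0))$. The group $H_4(V_0)$ is generated by the fundamental classes of the components $X_i$, so $\operatorname{im}(\varphi)$ is spanned by the classes dual to the $[X_i]$; concretely these are the tuples recording the intersection of each component with the double curves, i.e. the images of the $[E_j]$ under the inclusions into the respective components. I would argue that the image of my extension map meets $\operatorname{im}(\varphi)$ only in the zero class: a nonzero element of $[E_1]^\perp$ has nonzero first component $D_1$, whereas the first components of the generators of $\operatorname{im}(\varphi)$ are multiples of $[E_1]$ in $H^2(X_1)$, and $[E_1] \notin [E_1]^\perp$ since $E_1^2 = d > 0$. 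Thus the extension of a nonzero class in $[E_1]^\perp$ cannot lie in $\ker(i^*)$, giving injectivity of $j$.

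The step I expect to be the main obstacle is making the extension map well-defined and canonical enough that injectivity is genuinely controlled by the first component alone. The freedom in choosing $D_2,\dots,D_k$ could in principle allow the whole tuple to drift into $\operatorname{im}(\varphi)$ even when $D_1 \neq 0$, so I would need to pin down either a canonical splitting or else phrase the argument purely in terms of the first-component projection $\Gr_2(H^2(V_0)) \to H^2(X_1,\bQ)$ and observe that $\operatorname{im}(\varphi)$ projects into $\bQ\cdot[E_1]$. This reduces the whole injectivity claim to the single transparent fact that $[E_1]^\perp \cap \bQ[E_1] = 0$, which holds because $E_1^2 = d \neq 0$. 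A secondary technical point is checking that $i^*$ is injective on the image of my construction modulo this intersection, which again follows directly from exactness at $\Gr_2(H^2(V_0))$; I would keep the verification that the gluing conditions can always be solved (using that $D_1|_{E_1}$ is trivial and the $E_j$ are the double loci) at the level of a remark rather than a full computation.
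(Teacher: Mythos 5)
Your proof is correct and follows essentially the same route as the paper: the zero extension $D_1 \mapsto (D_1,0,\dots,0)$ is precisely how the paper identifies $[E_1]^{\perp}$ with the intersection $\Gr_2(H^2(V_0)) \cap H^2(X_1,\bZ)$ (well-definedness is automatic, since the gluing condition lives in $H^2(E_1,\bQ)$ where only the degree $D_1.E_1 = 0$ matters, so no choice of extension is involved), and injectivity is then deduced, exactly as in the paper, from exactness together with the fact that $\operatorname{im}(\varphi)$ consists of classes supported on the double curves $E_i$. Your first-component projection argument, reducing everything to $[E_1]^{\perp} \cap \bQ[E_1] = 0$ via $E_1^2 = d > 0$, is simply a clean way of making explicit the paper's closing claim that $\operatorname{im}(\varphi)$ meets $[E_1]^{\perp}$ trivially.
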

\begin{proof} By the discussion above, $[E_1]^{\perp}$ is equal to the intersection of the weight graded piece $\Gr_2(H^2(V_0))$ with $H^2(X_1,\bZ)$. We claim that the restriction $j := i^*|_{[E_1]^{\perp}}$ is injective.

To prove this, we look at the weight filtration on $H_4(V_0)$. This weight filtration is Poincar\'{e} dual to the weight filtration on $H^4(V_0)$ arising from Deligne's mixed Hodge structure. It has $\Gr_{-4}(H_4(V_0)) = H_4(V_0)$, generated by the classes $[X_i]$. The image of the map $\varphi$  is those classes supported on the curves $E_i$; for such classes the gluing condition becomes the condition that the sum of the coefficients of $E_i|_{X_i}$ in $H^2(X_i,\bQ)$ and $E_i|_{X_{i+1}}$ in $H^2(X_{i+1},\bQ)$ is zero, along with the classical \emph{triple point formula} $(E_i|_{X_i})^2 + (E_i|_{X_{i+1}})^2 = 0$ (which holds for any type II degeneration). It is clear that the intersection of this image with $[E_1]^{\perp}$ is trivial, so $j$ is injective by exactness.
\end{proof}

\begin{remark}\label{rem:choice} The injection $j\colon [E_1]^{\perp} \hookrightarrow \Gr_2(H^2_{\lim})$ defined by this lemma induces an embedding of $[E_1]^{\perp}$ into the orthogonal complement $(\bQ f)^{\perp}/\bQ f$ of the class $f$ in $H^2_{\lim}$. This should be thought of as analogous to the embedding $f_d^{\perp} \hookrightarrow F_d^{\perp}$ in the formulation of mirror symmetry from Section \ref{sec:mirror}. 

The definition of this embedding depends upon the class $f \in I$, which in turn depends upon the choice of $0$-dimensional cusp $\calC_0$. Thus we see that the choice of embedding $f_d^{\perp} \hookrightarrow F_d^{\perp}$ corresponds to the choice of a maximally unipotent monodromy (Type III) point $\calC_0$ in the Dolgachev-Nikulin-Pinkham formulation of mirror symmetry.
\end{remark}

Now let $L$ denote the preimage of $M \otimes \bQ$ under $j$. Then $L$ is a primitive sublattice of $[E_1]^{\perp} \subset H^2(X_1,\bZ) \cong \Pic(X_1)$.

\begin{proposition} The weak del Pezzo pair $(X_1,E_1)$ is $L$-polarized.
\end{proposition}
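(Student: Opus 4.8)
The plan is to verify the two defining conditions of an $L$-polarization for the natural inclusion $\nu\colon L \hookrightarrow [E_1]^{\perp} \hookrightarrow H^2(X_1,\bZ) \cong \Pic(X_1)$. Primitivity is immediate: $L$ is the preimage under $j$ of the $\bQ$-subspace $M \otimes \bQ$, hence saturated in $[E_1]^{\perp}$, which is itself saturated in $\Pic(X_1)$ as an orthogonal complement. The orthogonality condition $\nu(\beta).E_1 = 0$ for all $\beta \in L$ is equally immediate, since $L \subseteq [E_1]^{\perp}$ and $E_1 \in |-K_{X_1}|$. Thus the entire content of the proposition is the root condition: the existence of a positive system $\Phi^+ \subset R_L$ with $\nu(\Phi^+)$ effective.

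For this I would first reduce effectivity of a root to a condition on its restriction to $E_1$, in exact analogy with the converse direction of Lemma \ref{lem:periods}. Let $\alpha \in R_L$ be a root, so $\alpha^2 = -2$ and $\alpha.K_{X_1} = 0$ (the latter because $\alpha \in [E_1]^{\perp}$ and $E_1 \sim -K_{X_1}$). Riemann--Roch gives $\chi(\calO_{X_1}(\alpha)) = 0$, and since $(K_{X_1} - \alpha).(-K_{X_1}) = -K_{X_1}^2 = -d < 0$ with $-K_{X_1}$ nef, Serre duality forces $h^2(\alpha) = h^0(K_{X_1}-\alpha) = 0$; thus $h^0(\alpha) = h^1(\alpha)$, and the same holds for $-\alpha$. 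Restricting along the sequence $0 \to \calO_{X_1}(\alpha + K_{X_1}) \to \calO_{X_1}(\alpha) \to \calO_{E_1}(\alpha|_{E_1}) \to 0$ (noting $h^0(\alpha + K_{X_1}) = 0$ by the same nefness argument) and using Serre duality $h^1(\alpha + K_{X_1}) = h^1(-\alpha) = h^0(-\alpha)$, a short diagram chase shows that if $\alpha|_{E_1} \cong \calO_{E_1}$ then \emph{exactly one} of $\alpha$, $-\alpha$ is effective. I would then set $\Phi^+$ to be the set of effective roots of $R_L$: the first positive-system axiom is precisely this dichotomy, and the second (closure under root-sums) is automatic, since a sum of effective classes is effective.

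The main obstacle is the remaining input: that every root $\alpha \in R_L$ satisfies $\alpha|_{E_1} \cong \calO_{E_1}$, i.e.\ that the period point $\alpha \mapsto \alpha|_{E_1} \in \Pic^0(E_1)$ of the pair $(X_1,E_1)$ annihilates $R_L$. This is the del Pezzo counterpart of the vanishing $\phi(\alpha)=1$ in Lemma \ref{lem:periods}, and it is where the $M$-polarization of the family $\calV$ genuinely enters: $\alpha.E_1 = 0$ only yields numerical triviality of the restriction, whereas the finer statement that $\alpha|_{E_1}$ is trivial in $\Pic^0(E_1)$ must be extracted from the limiting mixed Hodge structure. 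By Definition \ref{def:Lpol} the classes of $M \hookrightarrow \Gr_2(H^2_{\lim})$ are algebraic and (via the trivial local subsystem $\calM$) monodromy-invariant on the fibres $V_p$, so their extension data in the Clemens--Schmid sequence \eqref{eq:cssequence}---which is governed by an Abel--Jacobi invariant valued in $\Pic^0(E_1)$ of the double curve---must vanish; since $\alpha \in R_L \subset L = j^{-1}(M\otimes\bQ)$ maps into $M \otimes \bQ$, this forces $\alpha|_{E_1}$ to be at worst torsion. Upgrading this to exact triviality is the delicate point, which I expect to follow from the primitivity of $L$ in $[E_1]^{\perp}$ together with the integral structure of the polarization; making this last step rigorous is the technical heart of the argument.
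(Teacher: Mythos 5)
Your reduction is sound as far as it goes: primitivity and orthogonality are indeed immediate, your Riemann--Roch/restriction-sequence argument (the del Pezzo analogue of \cite[Lemma 3.3]{msacc}, i.e.\ of the converse direction of Lemma \ref{lem:periods}) correctly shows that a root $\alpha$ with $\calO_{X_1}(\alpha)|_{E_1} \cong \calO_{E_1}$ has $\alpha$ or $-\alpha$ effective, and taking $\Phi^+$ to be the effective roots then satisfies both positive-system axioms. But the input you need --- that \emph{every} root of $L$ restricts trivially to $E_1$ in $\Pic^0(E_1)$, not merely with degree zero --- is precisely the hard content of the proposition, and your sketch does not establish it. The appeal to ``extension data governed by an Abel--Jacobi invariant'' is not an argument: the Clemens--Schmid sequence \eqref{eq:cssequence} only sees the weight-graded pieces, into which $[E_1]^{\perp}$ embeds by Lemma \ref{lemma:injection}, and nothing in the $M$-polarization hypothesis, as used in your sketch, constrains the gluing data in $\Pic^0(E_1)$ of a lift of $j(\alpha)$ to a line bundle on $V_0$. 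Your fallback --- that primitivity of $L$ plus the integral structure upgrades ``at worst torsion'' to ``trivial'' --- cannot work either: $\Pic^0(E_1)$ is a divisible group, and lattice-theoretic primitivity provides no mechanism for killing torsion points of an elliptic curve. Note also that the triviality you want is, logically, a \emph{consequence} of the proposition rather than a stepping stone to it: once $\pm\alpha$ is known to be effective, every component of an effective representative meets the nef class $E_1$ non-negatively with total intersection number zero, hence is disjoint from $E_1$, and triviality of the restriction follows.

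The paper sidesteps this issue entirely by arguing on the nearby fibres rather than on $X_1$: since the family is $M$-polarized, $\alpha' = j(\alpha)$ is a monodromy-invariant class in $\NS(V_p)$ with $(\alpha')^2 = -2$, so by \cite[Proposition VIII.3.7]{bpv} either $\alpha'$ or $-\alpha'$ is effective on each K3 fibre $V_p$; monodromy invariance lets these divisors sweep out a divisor on $\calV^*$, whose closure in $\calV$ meets $X_1$ in an effective divisor of class $\alpha$ or $-\alpha$. No period computation on $E_1$ is needed. If you wanted to salvage your intrinsic approach, the missing triviality statement would have to be extracted from exactly this kind of extension-of-divisors argument on the total space, at which point the cohomological machinery on $X_1$ becomes redundant.
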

\begin{proof} To show that $(X_1,E_1)$ is $L$-polarized, it remains to show that $R_L$ contains a set of positive roots $\Phi^+$ which lies in the effective cone in $\Pic(X_1)$.

To show this, it suffices to show that if $\alpha$ is any root in $L$, then either $\alpha$ or $-\alpha$ is effective in $H^2(X_1,\bZ)$ (the condition on sums of positive roots then follows immediately from the fact that the sum of two effective classes is effective). So let $\alpha \in L$ be a root. Then $\alpha' := j(\alpha)$ is an integral $(-2)$-class, so defines a root in $M$. As $\calV^* \to \Delta^*$ is an $M$-polarized family, $\alpha'$ thus defines a $(-2)$-class in $H^2(V_p,\bZ)$, for general $p \in \Delta^*$, that is invariant under monodromy.

Since $V_p$ is a K3 surface, a standard result \cite[Proposition VIII.3.7]{bpv} tells us that either $\alpha'$ or $-\alpha'$ is the class of an effective divisor on $V_p$. This class sweeps out a divisor on $\calV^*$, which extends to a divisor on $\calV$. The intersection of this divisor with $X_1$ is an effective divisor in the class $\alpha$ or $-\alpha$. Thus either $\alpha$ or $-\alpha$ is effective. \end{proof}

With this in place, we make the following definition.

\begin{definition} The lattice polarizations $L$ on $(X_1,E_1)$ and $M$ on $\calV^* \to \Delta^*$ are said to be \emph{compatible} if $j(L^\perp_{[E_1]^{\perp}}) \subset M^{\perp}$, where $L^\perp_{[E_1]^{\perp}}$ denotes the orthogonal complement of $L$ in ${[E_1]^{\perp}}$ and the orthogonal complement of $M$ is taken in the K3 lattice $\Lambda_{\mathrm{K3}}$.
\end{definition}

Throughout the remainder of this section we will always assume that the lattice polarizations $L$ and $M$ are compatible. The next proposition justifies this assumption by showing that, for any lattice polarized weak del Pezzo pair $(X,C)$, we may always find a compatible lattice polarized type II degeneration which has $(X,C)$ as a component of its central fibre.

\begin{proposition} \label{prop:extension} Let $(X,C)$ be an $L$-polarized del Pezzo pair of degree $d$, for some lattice $L$. Then there exists a Type II degeneration of K3 surfaces $\pi \colon \calV \to \Delta$ such that $(X,C)$ is a component of the central fibre of $\calV$ and the restriction of $\calV$ to $\Delta^*$ is an $M$-polarized family of K3 surfaces, for some lattice $M$ that is compatible with $L$.
\end{proposition}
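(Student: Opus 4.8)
The plan is to realise $(X,C)$ as the first component $X_1 = X$, $E_1 = C$ of the central fibre $V_0 = X_1 \cup_C X_2$ of a Type II degeneration, by gluing $X$ to a suitable second anticanonical rational surface along $C$ and then smoothing. View $C$ as a smooth genus-one curve. The triple point formula forces $(C|_{X_2})^2 = -(C|_{X_1})^2 = -d$, so I would take $X_2$ to be $\bP^2$ blown up at $9+d$ points $p_1,\dots,p_{9+d}$ lying on a plane cubic isomorphic to $C$, with $C \subset X_2$ the strict transform of that cubic; this is an anticanonical curve with $C^2 = 9-(9+d) = -d$. Gluing into a smoothable central fibre also requires Friedman's $d$-semistability condition $\calN_{C/X_1} \otimes \calN_{C/X_2} \cong \calO_C$ (here the "$d$'' is unrelated to the degree). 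Now $\calN_{C/X_1} \cong (-K_{X})|_C$ is a fixed line bundle of degree $d$, while $\calN_{C/X_2} \cong \calO_C(3\ell|_C - p_1 - \cdots - p_{9+d})$ has degree $-d$; since moving the $p_i$ along $C$ realises every class in $\Pic^{-d}(C)$, I can choose the configuration so that $\calN_{C/X_2} \cong \calN_{C/X_1}^{-1}$. This produces a $d$-semistable $V_0$ with $\omega_{V_0} \cong \calO_{V_0}$, and Friedman's smoothing theorem then yields a Type II degeneration $\pi\colon \calV \to \Delta$ with smooth total space, $\omega_\calV \cong \calO_\calV$, and K3 general fibre, whose central fibre is $V_0$ and one of whose components is $(X,C)$.

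It remains to produce a compatible lattice polarization. I would first recall from Lemma \ref{lemma:injection} that $[C]^\perp \subset H^2(X,\bZ)$ embeds via $j = i^*$ into $\Gr_2(H^2_{\lim})$ by extending classes by zero on $X_2$: every $\beta \in L \subset f_d^\perp = [C]^\perp$ satisfies $\beta.C = 0$, so the pair $(\beta,0)$ meets the Clemens--Schmid gluing condition and hence $j(L) \subset \Gr_2(H^2_{\lim})$ is a lattice of monodromy-invariant classes. As in the proof of the preceding proposition, such classes are limits of algebraic classes on the smooth fibres, so I would then invoke surjectivity of the period map for K3 surfaces to select the smoothing — equivalently, a point of $\calD_M$ approaching the one-dimensional cusp $\calC_1$ cut out by $V_0$ — whose general fibre $V_p$ has period orthogonal to $j(L)$. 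For such a choice $j(L) \subset \NS(V_p)$, and after adjoining an ample class one obtains a trivial local subsystem and a polarizing lattice $M$ with $j^{-1}(M\otimes\bQ) = L$, as required for $M$-polarization in the sense of Definition \ref{def:Lpol}.

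The compatibility condition $j\!\left(L^\perp_{[C]^\perp}\right) \subset M^\perp$ is the crux, and I expect it to be the main obstacle. Two points must be settled. First, that $j$ is an isometry onto its image on the relevant sublattice, so that orthogonality inside $[C]^\perp$ is transported faithfully to $\Gr_2(H^2_{\lim})$; this should follow from the standard identification of the vanishing cohomology of a Type II degeneration with the primitive cohomology of its components, but the passage to the quotient $I^\perp/I$ must be checked not to distort the pairing on $j([C]^\perp)$. Second, and more delicate, that $\NS(V_p)$ acquires no accidental algebraic classes pairing nontrivially with $j(L^\perp_{[C]^\perp})$, so that $M$ is exactly the minimal lattice forced by lying on the cusp $\calC_1$. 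For this I would again appeal to period-map surjectivity together with a genericity argument, choosing the $9+d$ points on $C$ — and hence the smoothing direction — as general as the $d$-semistability constraint allows, so that $\NS(V_p)$ is no larger than $\langle j(L), h\rangle^{\mathrm{sat}}$ for an ample $h$. Making this control of the Néron--Severi lattice precise, rather than merely generic, is the step that will require the most care.
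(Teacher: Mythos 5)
Your construction of the degeneration itself is essentially the paper's: glue $X$ to a blow-up $X_2$ of $\bP^2$ at points lying on a plane cubic isomorphic to $C$, arrange Friedman's $d$-semistability by sliding the points within $C$, and smooth the resulting normal crossing surface. (Your count of $9+d$ points is the one actually forced by the triple point formula and is consistent with the paper's degree-$9$ example, where $X_2$ is $\bP^2$ blown up in $18$ points; the number $18-d$ appearing in the paper's proof looks like a slip.) The genuine gap is in the second half, exactly where you flag it yourself: you never close the compatibility step, and the route you propose for it is not the right tool.

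The idea you are missing is that no control of $\NS(V_p)$ is needed at all, because Definition \ref{def:Lpol} only requires $M$ to be \emph{some} primitive sublattice of $\NS(V_p)$ underlying a trivial local subsystem and containing an ample class; it need not exhaust the N\'{e}ron-Severi group, so "accidental" algebraic classes on the fibres are irrelevant. The paper therefore defines $M$ on the central fibre, \emph{before} smoothing: choose a nef and big divisor $H$ on $X_2$ with $\calO_{X_2}(H)|_{C} \cong \calO_{X}(C)|_{C}$, so that $h := (C,H)$ satisfies the Clemens-Schmid gluing condition and exhibits $V_0$ as a polarized stable K3 surface of Type II in the sense of \cite[Definition 3.1]{npgttk3s}, and set $M$ equal to the smallest primitive sublattice of $\Gr_2(H^2(V_0))$ containing $L$ (extended by zero to $X_2$) and $h$. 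The smoothing is then produced not by Friedman's unpolarized smoothing theorem \cite{gsvnc} plus period-map surjectivity and genericity, but by a mild modification of \cite[Proposition 4.3]{npgttk3s}, which smooths a \emph{polarized} stable K3 surface keeping the prescribed classes algebraic; this yields directly that $\calV|_{\Delta^*}$ is $M$-polarized. Compatibility then becomes a triviality rather than "the crux": any class of $L^\perp_{[C]^\perp}$ is orthogonal to $L$ by definition and pairs to zero with $h$ (it lives on $X$ and is orthogonal to $C$), hence is orthogonal to $M$, giving $j(L^\perp_{[C]^\perp}) \subset M^\perp$. Your plan to force $\NS(V_p) = \langle j(L), h \rangle^{\mathrm{sat}}$ by choosing the points generally is both stronger than what the proposition asks for and, as you concede, not something your argument establishes; defining $M$ minimally on $V_0$ and invoking Friedman's polarized smoothing result is precisely how the paper avoids that difficulty.
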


\begin{proof} For clarity of notation, set $(X_1,E_1) := (X,C)$. We begin by finding a second pair $(X_2,E_1)$, so that $X_1 \cup_{E_1} X_2$ forms the central fibre of a Type II degeneration.

Choose a rational surface $X_2$ with a smooth anticanonical divisor isomorphic to $E_1$, so that $\calN_{E_1/X_1} \otimes \calN_{E_1/X_2} = \calO_{E_1}$ (this is Friedman's \cite{gsvnc} \emph{$d$-semistability} condition). One can construct such a pair $(X_2,E_1)$ as follows: begin with an embedding $E_1 \subset \bP^2$ (which is unique up to isomorphism), then choose $(18-d)$ points $P_1,\ldots,P_{18-d}$ on $E_1$ and blow them up to obtain $X_2$. The $d$-semistability criterion imposes a single condition on the $P_i$, so there is a $(17-d)$-dimensional family of such pairs $(X_2,E_1)$.

Next, choose a nef and big divisor $H$ on $X_2$ such that $\calO_{X_2}(H)|_{E_1} \cong \calO_{X_1}(E_1)|_{E_1}$. For the explicitly constructed pair above, one may take $H$ to be three times the pull-back of the hyperplane section from $\bP^2$ minus $d$ of  the exceptional curves; for this $H$ the condition $\calO_{X_2}(H)|_{E_1} \cong \calO_{X_1}(E_1)|_{E_1}$ imposes a second condition on the positions of the $P_i$,  so we have a $(16-d)$-dimensional family of suitable choices of $(X_2,E_1)$ and $H$.

Now glue the surfaces $X_1$ and $X_2$ along the curve $E_1$ to obtain a normal crossing surface $V_0 := X_1 \cup_{E_1} X_2$. By construction, $V_0$ is a polarized stable K3 surface of Type II \cite[Definition 3.1]{npgttk3s}, with polarizing class $h := (E_1,H) \in H^2(X_1,\bZ) \oplus H^2(X_2,\bZ)$. Define $M \subset H^2(X_1,\bZ) \oplus H^2(X_2,\bZ)$ to be the smallest primitive sublattice containing $L$ and $h$. By construction we have $M \subset \Gr_2(H^2(V_0))$, where we have equipped $H^2(V_0)$ with Deligne's mixed Hodge structure on a normal crossing variety.

Now, a mild modification to the proof of \cite[Proposition 4.3]{npgttk3s} shows that $V_0$ is the central fibre in a Type II degeneration of K3 surfaces $\pi \colon \calV \to \Delta$, such that the restriction of $\calV$ to $\Delta^*$ is an $M$-polarized family of K3 surfaces. It thus only remains to prove that $L$ and $M$ are compatible; this follows from the fact that $L^\perp_{[E_1]^{\perp}}$ is orthogonal to $M$.
\end{proof}

Now we proceed to the mirror. The next lemma allows us to relate the mirror lattices $\check{L}$ and $\check{M}$.

\begin{proposition} \label{prop:mirrorfibres} Assume that the lattice polarizations $L$ on $(X_1,E_1)$ and $M$ on $\calV^* \to \Delta^*$ are compatible. Then the mirror lattice $\check{L}$ is a sublattice of the orthogonal complement $(\bZ f)_{\check{M}}^{\perp}\,/ \, \bZ f$ of the vector $f$ in $\check{M}$.
\end{proposition}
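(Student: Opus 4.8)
The plan is to unwind the definitions of $\check{L}$ and $\check{M}$ and connect them through the embedding $j$ from Lemma \ref{lemma:injection}. Recall that $\check{L} = L^{\perp}_{f_d^{\perp}}$, the orthogonal complement of $L$ inside $f_d^{\perp} = [E_1]^{\perp}$ (identifying $f_d^{\perp} \cong F_d^{\perp}/\langle f_0 \rangle$ as usual), and that $\check{M} = (\bZ e)^{\perp}_{M^{\perp}}/\bZ e$. The class $f \in \check{M}$ is the primitive isotropic vector spanning $I$ together with $e$. So my first step is to show that $j$ carries $\check{L} = L^{\perp}_{[E_1]^{\perp}}$ into $M^{\perp}$: this is exactly the content of the compatibility hypothesis, $j(L^{\perp}_{[E_1]^{\perp}}) \subset M^{\perp}$.

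Second, I would track where inside $M^{\perp}$ the image lands relative to the isotropic data. The map $j$ factors through $\Gr_2(H^2_{\lim}) = I^{\perp}_{\bQ}/I_{\bQ}$, so the image of any class in $[E_1]^{\perp}$ automatically lies in $I^{\perp}/I$ modulo the weight filtration; in particular it is orthogonal to both $e$ and $f$ and is considered modulo the span of $e$ and $f$. Combined with the first step, $j(\check{L})$ lands in $M^{\perp} \cap I^{\perp}$, read modulo $I = \langle e, f\rangle$. The target lattice $(\bZ f)^{\perp}_{\check{M}}/\bZ f$ is precisely $\big((\bZ e)^{\perp}_{M^{\perp}} \cap (\bZ f)^{\perp}\big)/\langle e, f\rangle$, so I would verify that these two descriptions agree: a class orthogonal to $M$ and to all of $I$, taken modulo $I$, is by definition an element of $(\bZ f)^{\perp}_{\check{M}}/\bZ f$. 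This is a matter of chasing the quotients and confirming that the two successive orthogonal-complement-and-quotient operations defining $\check{M}$ and then $(\bZ f)^{\perp}_{\check{M}}/\bZ f$ compose to give orthogonality to $M \oplus I$ modulo $I$.

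Third, I must confirm that $j$ restricted to $\check{L}$ is injective with image a genuine sublattice (not merely a rational embedding). Injectivity of $j$ on all of $[E_1]^{\perp}$ is Lemma \ref{lemma:injection}, so this restricts to injectivity on $\check{L}$; primitivity and the integrality of the image follow because $j$ is induced by the integral map $i^*$ in the Clemens-Schmid sequence \eqref{eq:cssequence}, and $\check{L}$ is primitive in $[E_1]^{\perp}$. I would then note that $j$ is an isometry onto its image (it is induced by restriction of cohomology classes, which respects intersection forms on the relevant graded pieces), so $\check{L}$ embeds as a genuine negative definite sublattice.

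The main obstacle I anticipate is the careful bookkeeping of which quotients and which rational-versus-integral structures are in play: the identification $f_d^{\perp} \cong F_d^{\perp}/\langle f_0 \rangle$, the passage through $\Gr_2(H^2_{\lim}) = I^{\perp}_{\bQ}/I_{\bQ}$, and the two-step definition of $\check{M}$ all involve quotients by isotropic classes, and one must ensure these quotients are taken consistently so that orthogonality to $f$ inside $\check{M}$ matches orthogonality to $I$ in $H^2_{\lim}$ before quotienting. In particular, the subtlety is that $\check{L}$ is defined without any quotient by an isotropic class, whereas the target $(\bZ f)^{\perp}_{\check{M}}/\bZ f$ involves quotienting by $f$; reconciling this requires using that $j$ itself already builds in the quotient by $I$ (hence by $f$), so no information is lost. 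I expect the intersection-form compatibility to be routine given that all maps are induced by restriction, but verifying that the image is primitive (rather than merely saturated up to finite index) may require the explicit triple-point-formula analysis from the proof of Lemma \ref{lemma:injection}.
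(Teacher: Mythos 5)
Your proposal is correct and takes essentially the same route as the paper's proof: compatibility gives the injection $j(\check{L}) \hookrightarrow M^{\perp}$, and the fact that the image of $j$ lies in $\Gr_2(H^2_{\lim}) = I^{\perp}_{\bQ}/I_{\bQ}$ gives orthogonality to $e$ and $f$, identifying the image inside $(\bZ f)^{\perp}_{\check{M}}/\bZ f$. Your additional bookkeeping (the quotient chase, injectivity via Lemma \ref{lemma:injection}, and integrality) simply spells out what the paper leaves implicit.
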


\begin{proof} By definition, $j$ defines an injection $\check{L} \hookrightarrow M^{\perp}$. To complete the proof, we have to show that $\check{L}$ is in the orthogonal complement of both $e$ and $f$; i.e. that $\check{L} \subset I^{\perp}/I$. But this follows immediately from the fact that the image of $j$ lies in $\Gr_2(H^2_{\lim})$.
\end{proof}

Now we put all the pieces together. Let $W$ be a K3 surface that is Dolgachev-Nikulin-Pinkham mirror to a general fibre $V_p$ of $\calV$. Then there exists a lattice polarization $\nu\colon \check{M} \hookrightarrow \Pic(W)$. By the previous proposition, we may identify $\check{L}$ with a sublattice of $\check{M}$.

Let $R_{\check{L}}$ denote the sublattice of $\check{L}$ generated by roots and let $\alpha, \alpha' \in R_{\check{L}}$ be any two roots. It follows from standard results on K3 surfaces \cite[Proposition VIII.3.7]{bpv} that either $\nu(\alpha)$ or $\nu(-\alpha)$ is the class of an effective divisor on $W$, and if $\nu(\alpha)$ and $\nu(\alpha')$ are both classes of effective divisors, then so is $\nu(\alpha + \alpha')$. Thus there is a set of positive roots $\Phi^+ \subset R_{\check{L}}$ such that $\nu(\Phi^+)$ is contained in the effective cone in $\Pic(W)$.  

In fact, we can say more. Recall from \cite[Proposition 4.1]{mstdfcym} that $\nu(f)$ is the class of a fibre in an elliptic fibration on $W$ and, by Proposition \ref{prop:mirrorfibres}, we have $\check{L} \subset (\bZ f)_{\check{M}}^{\perp}\,/ \, \bZ f$. Thus $\nu(\beta).\nu(f) = 0$ for all $\beta \in \check{L}$.

This looks a lot like the definition of an $\check{L}$-polarization on a rational elliptic surface $(Y,D)$ of type $d$ (where we identify $\nu(f)$ with the class of the divisor $D$). This is not unexpected: the discussion from \cite[Section 4]{mstdfcym} suggests that the type II degeneration $\calV$ should be mirror to a ``slicing'' of the base $\bP^1$ of the fibration on $W$ into a series of (necessarily) analytic open sets, such that the fibration over each slice is mirror to a component of the central fibre $V_0$. In our setting, this gives the following conjecture.

\begin{conjecture} \label{conj:dht} There exists an analytic open set in $W$ which is isomorphic to an analytic open set $Y \setminus Z_D$ in an $\check{L}$-polarized rational elliptic surface $(Y,D)$ of type $d$, where $Z_D$ is a small closed neighbourhood of the elliptic fibre $D$.
\end{conjecture}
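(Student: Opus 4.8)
The plan is to realize the desired open set as one ``half'' of the elliptic fibration on $W$, thereby reducing the statement to (i) a period and lattice computation that identifies the compactified half as a $\check{L}$-polarized rational elliptic surface of type $d$, and (ii) a geometric gluing argument that produces the biholomorphism. First I would fix the elliptic fibration $\pi_W \colon W \to \bP^1$ cut out by the fibre class $\nu(f)$, and use the second generator $e$ of the isotropic plane $I$---equivalently, the choice of $0$-dimensional cusp $\calC_0$, which by Remark \ref{rem:choice} is exactly the datum rigidifying the embedding $f_d^{\perp} \hookrightarrow F_d^{\perp}$---to single out a closed disc $\Delta_1 \subset \bP^1$ over which $\pi_W$ is mirror to the component $X_1$. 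The target is then to prove $\pi_W^{-1}(\Delta_1) \cong Y \setminus Z_D$. Since $\bP^1 \setminus \Delta_1$ is again a disc, $\pi_W^{-1}(\Delta_1)$ is an elliptic surface over a disc, matching the shape of $Y \setminus Z_D$, which fibres over the complement of a neighbourhood of the $\mathrm{I}_d$ point of $Y$.

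Second, I would pin down the complex geometry of this half from period data. By Proposition \ref{prop:mirrorfibres} we have $\check{L} \subset (\bZ f)^{\perp}_{\check{M}}/\bZ f$, so the roots of $R_{\check{L}}$ are realized by vertical $(-2)$-curves of $\pi_W$ and are expected to assemble into the reducible fibres lying over $\Delta_1$; meanwhile, the periods of $W$ restrict, via the fibrewise $\Pic^0$ of $\pi_W$ and the identifications $f_d^{\perp} \cong F_d^{\perp}/\langle f_0\rangle$ and $f_d^{\perp} \hookrightarrow F_d^{\perp}$, to a point of the period domain $\calP_{d,\check{L}}$. One then checks two topological conditions on $\pi_W^{-1}(\Delta_1)$: that the singular fibres it contains have total Euler number $12-d$ (consistent with the $12-d$ singular fibres predicted in \cite{msdpsvccs}, so that filling in one more fibre yields a surface with $\chi = 12$), and that the monodromy around $\partial\Delta_1$ is conjugate to $T^d$. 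Granting these, one glues a standard local $\mathrm{I}_d$ model across the neck $\pi_W^{-1}(\partial\Delta_1)$ to obtain a relatively minimal rational elliptic surface $(Y,D)$ with an $\mathrm{I}_d$ fibre, i.e.\ of type $d$; Lemma \ref{lem:periods} and Theorem \ref{thm:ratellmoduli} then identify it as the $\check{L}$-polarized surface whose period point is the one computed above. By construction the excised open set $Y \setminus Z_D$ is biholomorphic to $\pi_W^{-1}(\Delta_1)$, which is the assertion.

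The crux is therefore establishing these two conditions, and more fundamentally the very existence of a slicing of $\pi_W$ whose $\Delta_1$-part is mirror to $X_1$: these facts are not forced by the abstract lattice data, but require knowing how $W$ is assembled from the Landau-Ginzburg models of the central-fibre components. The honest way to supply this is an SYZ construction, as in Kanazawa's proofs for elliptic curves \cite{dhtcsyzmsec} and Abelian surfaces \cite{dtfgqms}: build a special Lagrangian torus fibration on the general fibre $V_p$ adapted to the Type II degeneration, observe that its base subdivides into pieces $T$-dual to the Landau-Ginzburg bases of $X_1$ and $X_2$, and identify the $T$-dual of the $X_1$-piece with $Y \setminus D$. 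The main obstacle is precisely the analytic input this requires---control of the instanton corrections that deform the semiflat complex structure on the mirror---which is tractable for elliptic curves and Abelian surfaces but remains out of reach for K3 surfaces. This is the same gap that obstructs a full proof of the Doran-Harder-Thompson conjecture \cite{mstdfcym} in dimension two, and is why the statement is recorded only as a conjecture.
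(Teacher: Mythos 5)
The statement you were asked to prove is, in the paper, a conjecture: the authors give no proof of it, and say explicitly in the introduction that their compatibility results ``do not extend as far as a full proof of the Doran-Harder-Thompson conjecture in this case (Conjecture \ref{conj:dht}).'' What the paper does establish is precisely the lattice-level evidence preceding the statement: positive roots of $R_{\check{L}}$ map to effective classes in $\Pic(W)$ (via \cite[Proposition VIII.3.7]{bpv}), and these classes are orthogonal to the fibre class $\nu(f)$ (Proposition \ref{prop:mirrorfibres}), so that $W$ carries structure resembling an $\check{L}$-polarization near an elliptic fibre --- and nothing more. Your proposal must therefore be judged not against a proof in the paper (there is none), but on whether it closes the gap the authors left open. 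It does not.

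Concretely, the steps you label as things ``one then checks'' are not checks but the substance of the conjecture itself. There is no construction of the disc $\Delta_1$: nothing in the lattice data or in the period point of $W$ singles out a region of the base $\bP^1$ that is ``mirror to $X_1$,'' and your own final paragraph concedes exactly this. The claims that the singular fibres over $\Delta_1$ have total Euler number $12-d$ and that the boundary monodromy is conjugate to $T^d$ cannot even be formulated before $\Delta_1$ exists; and even granting both, gluing in a local $\mathrm{I}_d$ model to produce a relatively minimal rational elliptic surface requires an isomorphism of germs of fibrations along the neck, not merely conjugacy of monodromy --- in the analytic category this is an additional nontrivial matching of moduli. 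Your appeal to Lemma \ref{lem:periods} and Theorem \ref{thm:ratellmoduli} at the end is legitimate once all of this is granted, but by that point the conjecture has already been assumed. To your credit, you diagnose the missing ingredient correctly: an SYZ construction with control of instanton corrections, as in Kanazawa's proofs for elliptic curves \cite{dhtcsyzmsec} and Abelian surfaces \cite{dtfgqms}, which remains open for K3 surfaces, and this coincides with the authors' own reason for recording the statement as a conjecture. So what you have written is an honest and reasonable strategy outline, consistent with the paper's discussion, but it contains a genuine gap --- the existence of the slicing and the two topological verifications --- and is not a proof.
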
 

\begin{remark} The philosophy of \cite{mstdfcym} suggests that the notion of mirror symmetry presented in this paper should admit a significant generalization to \emph{quasi-Fano pairs}: pairs $(X,C)$ consisting of a rational surface along with a smooth (but not necessarily nef and big) anticanonical divisor $C$. The pair $(X_2,E_1)$ constructed in the proof of Proposition \ref{prop:extension} is an example of such a quasi-Fano pair, that is not a weak del Pezzo pair (the anticanonical divisor $E_1$ is not nef). Most of the proofs in this section (with the notable exception of Proposition \ref{prop:extension}) require only minor modifications to work in this broader setting, and we would therefore expect a version  of Conjecture \ref{conj:dht} to hold. In this setting it is likely that the mirror will still be an elliptic fibration over an analytic open subset of $\bC$, but we do not expect it to always admit a compactification to a rational elliptic surface. We aim to address this in future work.
\end{remark}

\begin{remark} We briefly remark on a version of this conjecture in the setting of homological mirror symmetry. Homological mirror symmetry between the K3 surfaces $V_p$ and $W$ may be formulated as an equivalence between the category $\Perf(V_p)$ of perfect complexes on $V_p$ and the Fukaya category $\Fuk(W)$ of $W$. Harder and Katzarkov \cite{psca} have studied the extension of this equivalence to semistable degenerations of Type II, albeit in a much broader setting to the one considered here. In particular, they construct the category $\Perf(V_0)$ of perfect complexes on the semistable fibre $V_0$, which is a deformation of $\Perf(V_p)$, then present evidence for the conjecture that $\Perf(V_0)$ is equivalent to the derived exact Fukaya category of the complement of $(k-1)$ smooth fibres in the fibration on $W$ (where, as above, $k$ denotes the number of irreducible components in $V_0$).

To complete the picture note that, according to Seidel \cite{fcd}, the Fukaya category of $W$ with $(k-1)$ smooth fibres removed should be a deformation of the Fukaya category of $W$. We expect that Seidel's deformation of $\Fuk(W)$ should agree with the deformation of $\Perf(V_p)$ to $\Perf(V_0)$.
\end{remark}

\section{Compatibility with the Gross-Siebert program}\label{sec:gs}

In this section we will describe how the conjectures of Section \ref{sec:mirror} should fit with the Gross-Siebert mirror construction. We rely extensively on the methods from \cite{mslcysi,imspgwi} to perform these computations; we refer the interested reader to those papers for further details of the techniques used.

\subsection{Degree $8'$}\label{sec:8'}

We begin by describing in detail how this construction works in the degree $8'$ case, as this case is simple enough to be tractable but complex enough to display some interesting geometry. In this case $f_{8'}^{\perp} \subset \Lambda_{8'} \cong \mathrm{II}_{1,1}$ is generated by the root $a-b$ (here $a,b$ are generators of $\mathrm{II}_{1,1}$ as in Section \ref{sec:lattices}; they have $a^2 = b^2 = 0$ and $a.b = 1$). We therefore have two possible choices of lattice polarization: $L=0$ or $L = f_{8'}^{\perp} \cong A_1$. So that we can discuss the ideas of Section \ref{sec:complexkahler}, for now we assume $L = f_{8'}^{\perp}$; the case $L = 0$ will be discussed in Remark \ref{rem:generic8'}.

So let $(X,C)$ be an $L$-polarized weak del Pezzo pair of degree $8'$, with $L = f_{8'}^{\perp} \cong A_1$. Then $X \cong \bF_2$ and $C$ is a smooth elliptic curve. Let $s$ denote the class in $H^2(X,\bZ) \cong \Pic(X)$ of the unique $(-2)$-curve in $X$ and let $f$ denote the class of a fibre of the ruling on $X$. Then $s,f$ are primitive generators of $H^2(X,\bZ)$ and $s$ is a positive root which generates $L$. If we identify $s$ and $f$ with their Poincar\'{e} duals, then $s,f$ also generate the cone of effective curves on $X$.

\begin{remark}\label{rem:GSnonrigorous} To run the Gross-Siebert program and find the mirror of $(X,C)$, we first need to perform a \emph{toric degeneration} of $(X,C)$. However, even for simple toric degenerations, the computations required to proceed with the program from there are intractable with current methods. 

Instead, we travel by an easier route: we fix $X$ and degenerate $C$ to a union of four smooth rational curves $C' = C_1 \cup C_2 \cup C_3 \cup C_4$, find the Gross-Siebert mirror of the pair $(X,C')$ (which is very tractable with the methods of \cite{imspgwi}), then attempt to correct for the degeneration of $C$ at the end. Note that this is not a rigorous application of the Gross-Siebert program, so the mirror we produce here is \emph{not} the Gross-Siebert mirror of $(X,C)$ in a strict sense; we nonetheless hope that most important properties are shared between the two constructions.\end{remark}

Let $C' = C_1 \cup C_2 \cup C_3 \cup C_4 \subset X$ denote a degeneration of $C$ a union of four smooth rational curves, arranged in a square and ordered cyclically, chosen so that $[C_1] = s$, $[C_2] = [C_4] = f$, and $[C_3] = s + 2f$.

Next define $B$ to be the tropicalization of $X$, defined as follows. Let $\Div_{C'}(X) \subset \Div(X)$ denote the subspace of divisors supported on $C'$ and set $\Div_{C'}(X)_{\bR} := \Div_{C'}(X) \otimes_{\bZ} \bR$. Note that $\{C_i\}$ forms a basis of $\Div_{C'}(X)_{\bR}$. Let $\Div_{C'}(X)^* = \Hom(\Div_{C'}(X),\bZ)$ denote the dual lattice, $\Div_{C'}(X)_{\bR}^* := \Div_{C'}(X)^* \otimes_{\bZ} \bR$, and $\{C_i^*\}$ denote the dual basis of $\Div_{C'}(X)_{\bR}^*$. Define a collection of cones in $\Div_{C'}(X)_{\bR}^*$ by
\begin{align*}
\calP &:= \left\{\sum_{i \in I} \bR_{\geq 0} C_i^* \middle| I \subset \{1,2,3,4\} \text{ such that } \bigcap_{i \in I} C_i \neq \emptyset \right\} \\
&= \{\bR_{\geq 0} C_1^*,\ \bR_{\geq 0} C_2^*,\ \bR_{\geq 0} C_3^*,\ \bR_{\geq 0} C_4^*\ ,\bR_{\geq 0} C_1^*+\bR_{\geq 0} C_2^*,\\
& \quad \quad \bR_{\geq 0} C_2^*+\bR_{\geq 0} C_3^*,\  \bR_{\geq 0} C_3^*+\bR_{\geq 0} C_4^*,\  \bR_{\geq 0} C_1^*+\bR_{\geq 0} C_4^*\}.
\end{align*}
and set $B := \bigcup_{\tau \in \calP} \tau$. Finally, set $B(\bZ) := B \cap  \Div_{C'}(X)^*$.

Let $P$ denote the monoid in $H^2(X,\bZ)$ generated by the classes $s$ and $f$; note that $P$ contains all effective curve classes. We may identify $\bC[P]$ with $\bC[x,y]$ by identifying the class $ms + nf$ with the monomial $x^my^n$. Let $\mathfrak{m} = \langle x,y \rangle$ denote the maximal ideal, and set $I \subset \bC[x,y]$ to be any ideal with $\sqrt{I} = \mathfrak{m}$. Write $A_I = \bC[x,y]/I$ and set $R_I$ to be the free $A_I$-module
\[R_I := \bigoplus_{p \in B(\bZ)} A_I \vartheta_p.\]

We next define a multiplicative structure to make $R_I$ into an $A_I$-algebra. Define
\[\vartheta_p . \vartheta_q := \sum_{r \in B(\bZ)} \alpha_{pqr} \vartheta_r\]
for $\alpha_{pqr} \in A_I$ defined by 
\[\alpha_{pqr} = \sum_{\beta \in P \setminus I} N_{pqr}^{\beta}x^my^n,\]
where $\beta = ms + nf$ and $N_{pqr}^{\beta}$ is a count of curves in the class $\beta$.

In our case, $N_{pqr}^{\beta}$ is the number of rational curves $Z$ in class $\beta$ with marked points $x_1,x_2,x_3$ such that
\begin{itemize}
\item $Z$ is tangent to $C_i$ at $x_1$ with order $p(C_i)$;
\item $Z$ is tangent to $C_i$ at $x_2$ with order $q(C_i)$;
\item $Z$ is tangent to $C_i$ at $x_3$ with order $-r(C_i)$;
\item $x_3$ maps to a specified point in $\left(\cap_{i : r(C_i)>0}C_i\right)^{\circ}$ (where this intersection is $X$ if $r = 0$).
\end{itemize}

For instance, for the product $\vartheta_{C_2^*}.\vartheta_{C_{4}^*}$, if we set $r = \sum_{j=1}^4 r_j C_j^*$ we find
\begin{align*}\beta.C_k &= C_2^*(C_k) + C_{4}^*(C_k) - \sum_{j=1}^4r_jC_j^*(C_k) \\
&= \left\{\begin{array}{cl}  -r_k & \text{if } k = 1,3 \\ 1-r_k & \text{if } k = 2,4. \end{array}\right.
\end{align*}
But, if $\beta = ms + nf$, we also see that
\[\beta.C_k = \left\{ \begin{array}{cl} -2m + n & \text{if } k = 1 \\ m & \text{if } k = 2,4 \\ n & \text{if } k = 3. \end{array} \right.\]
Since $m,n \geq 0$ and $r_k \geq 0$ for all $k$,  it follows that we must have $n = 0$ and $m \in \{0,1\}$. However, if $m = 0$ then $r = C_2^* + C_4^*$, which is not a class in $B(\bZ)$. So we must have $m = 1$, which implies that $\beta = s$ and $r = 2C_1^*$.

$N_{C_2^*,C_4^*,2C_1^*}^s$ is the number of rational curves in the class $s = [C_1]$ which intersect $C_2$ and $C_4$ once each, have intersection number $(-2)$ with $C_1$, and pass through a specified point in $C_1^{\circ}$. There is clearly only one such curve (which is $C_1$ itself), so $N_{C_2^*,C_4^*,2C_1^*}^s = 1$ and $\alpha_{C_2^*,C_4^*,2C_1^*} = x$. Thus we find
\[\vartheta_{C_2^*}.\vartheta_{C_{4}^*} = x\, \vartheta_{2C_1^*}.\]

Similarly, with the convention $\vartheta_0 = 1$, we obtain
\begin{align*}
\vartheta_{C_1^*}.\vartheta_{C_{3}^*} &= y, \\
\vartheta_{C_i^*}.\vartheta_{C_{i}^*} &= \vartheta_{2C_i^*}, \\
\vartheta_{C_i^*}.\vartheta_{C_{i+1}^*} &= \vartheta_{C_i^* + C_{i+1}^*},
\end{align*}
where all indices are taken modulo $4$.

Finally, the Gross-Siebert mirror of $(X,C')$ is a general member of the family $\Spec\hat{R} \to \Spec\bC[x,y]$, where $\hat{R}$ is the inverse limit $\lim_{\leftarrow} R_I$ over ideals $I \subset \bC[x,y]$ with $\sqrt{I} = \mathfrak{m}$.  This is equal to
\[\xymatrix{\Spec\bC[x,y][\vartheta_{C_1^*},\vartheta_{C_2^*},\vartheta_{C_3^*},\vartheta_{C_4^*}]\,/\, (\vartheta_{C_1^*}\vartheta_{C_{3}^*} - y,\ \vartheta_{C_2^*}\vartheta_{C_{4}^*} - x\, \vartheta^2_{C_1^*}) \ar[d] \\
\Spec \bC[x,y].}\]
A general member of this family admits a fibration over $\bC$ (with parameter $t$) given by $t = \vartheta_{C_1^*}+\vartheta_{C_2^*}+\vartheta_{C_3^*}+\vartheta_{C_4^*}$. The general fibre in this fibration is an elliptic curve with four punctures.
\medskip

Now we come back to the fact that we began by degenerating $C$ to $C' = C_1 \cup C_2 \cup C_3 \cup C_4$ (see Remark \ref{rem:GSnonrigorous}). The philosophy of mirror symmetry states that smoothing the four nodes in $C'$ to obtain $C$ corresponds to filling in the four punctures in the elliptic fibres on the mirror, so we attempt to correct for our original degeneration $C \leadsto C'$ by compactifying the fibres in the mirror of $(X,C')$ to smooth elliptic curves.

This can be done explicitly as follows. Fix $x,y \in \bC$ generic and write a general member of the family above as
\begin{equation}\label{eq:F2}\{ac-y,\ bd-xa^2\} \subset \bA^4[a,b,c,d],\end{equation}
where we have set $a:= \vartheta_{C_1^*}$, $b:= \vartheta_{C_2^*}$, $c:= \vartheta_{C_3^*}$, and $d:= \vartheta_{C_4^*}$ for clarity. Compactifying to $\bP^4$, we obtain
\[\{ac-ye^2,\ bd-xa^2\} \subset \bP^4[a,b,c,d,e].\]
This surface has simple nodes ($A_1$ singularities) at the points $(0,1,0,0,0)$ and $(0,0,0,1,0)$, and an $A_3$ at the point $(0,0,1,0,0)$. These singularities should be resolved in the usual way to obtain a smooth surface $S$. 

The equation $a+b+c+d = te$ (for $t \in \bP^1$) defines a pencil of elliptic curves with $4$ base points on $S$. Blowing up these four base points, we obtain a rational elliptic surface $Y$ fibred over $\bP^1$ (with parameter $t$), which has an $\mathrm{I}_8$ singular fibre $D$ over $t = \infty$ and four singular fibres of type $\mathrm{I}_1$ over the points $t = \pm 2\sqrt{y(1 \pm 2 \sqrt{x})}$. We find that the pair $(Y,D)$ is a generic (unpolarized) rational elliptic surface of type $8'$. Removing $D$, we obtain an elliptic fibration over $\bC$, which should be thought of as mirror of our original pair $(X,C)$. As $\check{L} = 0$ in this setting, this agrees with the prediction of Conjecture \ref{conj:mirror}.
\medskip

Since $L = f_{8'}^{\perp}$ and $\check{L} = 0$, we may go further and ask how this fits with the correspondence between complex and K\"{a}hler moduli discussed in Section \ref{sec:complexkahler}. Suppose that we have fixed an embedding $L = f_{8'}^{\perp} \hookrightarrow F_{8'}^{\perp}$. Then we wish to examine the relationship between the K\"{a}hler cone of $X$ in a neighbourhood of the ray generated by $[C]$, and the strata in the complex moduli space of $Y$ in a neighbourhood of the stratum $U_{8',L}$.

We begin by studying the complex deformations of $Y$ given by the family \eqref{eq:F2}. Away from the lines $\{x = 0\}$ and $\{y = 0\}$, where the members of this family become non-normal, there is only one special sublocus where degenerations occur. This is the sublocus $\{x = \frac{1}{4}\}$, where two of the $\mathrm{I}_1$ fibres collide to produce a fibre of type $\mathrm{I}_2$. A member of the family \eqref{eq:F2} over a point in $\{x = \frac{1}{4}\}$ is a lattice polarized rational elliptic surface of type $8'$, polarized by the lattice $L \cong A_1$; such surfaces are all isomorphic. Indeed, the members of the family \eqref{eq:F2} over any line $\{x = \text{constant}, y \neq 0\}$ are all isomorphic, so the period map factors through the projection to the $y$-axis. The correspondence with the stratification \eqref{eq:strata} here is clear: $\calP_{8',0}$ corresponds to the entire $(x,y)$-plane (without the two lines $\{x=0\}$ and $\{y=0\}$), and $U_{8',L}$ corresponds to the sublocus $\{x = \frac{1}{4}\}$.

Next we look at the nef cone of $X$. This cone is generated by the two classes $[C] = s+2f$ and $f$, which lie along its boundary rays. It is dual to the effective cone, which is generated by the classes $s$ and $f$.

Now we make an important observation. The ray generated by $[C]$ in the nef cone is dual to the ray generated by $s$ in the effective cone. Let $P' \subset P$ denote the submonoid obtained by setting the coefficient of $s$ to zero (so $P'$ is generated by multiples of $f$). Identifying $\bC[P]$ with $\bC[x,y]$ and taking Spec, we find that the submonoid $P'$ gives rise to a sublocus $\{x = \text{constant}\}$ in $\bA^2[x,y]$. This is precisely the form observed for the sublocus corresponding to the lattice enhancement above. This idea will be discussed further in Section \ref{sec:GSconjecture}.

\begin{remark}\label{rem:generic8'} One may ask what happens if we perform the same calculation for a generic (unpolarized) weak del Pezzo pair $(X,C)$ of degree $8'$. Here $X = \bP^1 \times \bP^1$ and $C$ is a smooth anticanonical divisor. For a suitable degeneration $C \leadsto C'$, the Gross-Siebert mirror of $(X,C')$ is the family over $\Spec \bC[x,y]$ given by
\begin{equation}\label{eq:generic8'} \{ac - y,\ bd - x\} \subset \bA^4[a,b,c,d].\end{equation}
Compactifying to $\bP^4$, we obtain
\[\{ac-ye^2,\ bd-xe^2\} \subset \bP^4[a,b,c,d,e].\]
This surface has four simple nodes at $(1,0,0,0,0)$, $(0,1,0,0,0)$, $(0,0,1,0,0)$, and $(0,0,0,1,0)$; these should be resolved to obtain a smooth surface $S$. 

The equation $a+b+c+d = te$ (for $t \in \bP^1$) defines a pencil of elliptic curves with $4$ base points on $S$. Blowing up these four base points, we obtain a rational elliptic surface $Y$ fibred over $\bP^1$ (with parameter $t$), which has an $\mathrm{I}_8$ singular fibre $D$ over $t = \infty$ and four singular fibres of type $\mathrm{I}_1$ at the points $t = \pm 2\sqrt{x} \pm 2 \sqrt{y}$. As above, the pair $(Y,D)$ obtained is a generic rational elliptic surface of type $8'$. Removing $D$, we obtain an elliptic fibration over $\bC$, with four singular fibres of type $\mathrm{I}_1$. 

This is \emph{not} what is predicted by Conjecture \ref{conj:mirror}, which claims that the mirror should be polarized by the lattice $f_{8'}^{\perp} \cong A_1$. Such a polarization should give rise to a fibration over $\bC$ with two fibres of type $\mathrm{I}_1$ and one fibre of type $\mathrm{I}_2$. 

However, we may salvage something: $f_{8'}^{\perp}$-polarized rational elliptic surfaces appear as the subfamily $x = y$ in the family \eqref{eq:generic8'}. We postulate that it is this \emph{subfamily} which is the mirror to $(X,C)$ in the lattice polarized sense.

This appears to be a general phenomenon. Indeed, in all cases with $R_{\check{L}} \neq 0$ that we have explicitly computed, the mirror family predicted by Conjecture \ref{conj:mirror} appears as a subfamily of the computed mirror family.\end{remark}

\subsection{Degree $8$}\label{sec:8}

It is instructive here to contrast the case of a weak del Pezzo pair $(X,C)$ of degree $8$. In this case, $X$ is always isomorphic to the Hirzebruch surface $\bF_1$ (independent of the choice of lattice polarization) and $C$ is a smooth anticanonical divisor. Running the Gross-Siebert program for a suitable degeneration $C \leadsto C'$, as above, we obtain a family over $\bA^2[x,y]$ given by 
\begin{equation}\label{eq:generic8} \{ac - y,\ bd - xa\} \subset \bA^4[a,b,c,d],\end{equation}
which is the Gross-Siebert mirror to the pair $(X,C')$. Compactifying to  $\bP^4$, we obtain
\[\{ac-ye^2,\ bd-xae\} \subset \bP^4[a,b,c,d,e].\]
This surface has $A_1$ singularities at $(0,1,0,0,0)$ and $(0,0,0,1,0)$, and an $A_2$ at $(0,0,1,0,0)$; these should be resolved to obtain a smooth surface $S$. 

The equation $a+b+c+d = te$ (for $t \in \bP^1$) defines a pencil of elliptic curves with $4$ base points on $S$. Blowing up these four base points, we obtain a rational elliptic surface $Y$ fibred over $\bP^1$ (with parameter $t$), which has an $\mathrm{I}_8$ singular fibre $D$ over $t = \infty$ and four singular fibres of type $\mathrm{I}_1$ at the roots of the polynomial
\[t^4 + xt^3-8yt^2-36xyt-27x^2y+16y^2.\] 
In this case, the pair $(Y,D)$ is a generic rational elliptic surface of type $8$. Removing $D$, we obtain an elliptic fibration over $\bC$, which should be thought of as mirror to our original pair $(X,C)$.
\medskip

How does this fit with lattice polarizations? Recall that $f_8 = 3l - e_1 \in \mathrm{I}_{1,1}$, so $f_8^{\perp}$ is generated by $l - 3e_1$, which has self-intersection $-8$ (see Table \ref{tab:latticetype}). Thus $f_8^{\perp}$ does not contain any roots and $R_L = \{0\}$ independent of which polarizing lattice $L$ we take. Proceeding to the mirror, the fact that $f_8^{\perp}$ does not contain any roots also implies that $R_{\check{L}}$ must be trivial, so the mirror to $(X,C)$ should be the open set $Y \setminus D$ in a generic rational elliptic surface $(Y,D)$ of type $8$. This is exactly what is given by equation \eqref{eq:generic8}.

This is not quite the end of the story for degree $8$. Indeed, one may ask whether the rational elliptic surface defined by equation \eqref{eq:generic8} can ever acquire fibres that are worse than $\mathrm{I}_1$ away from $t = \infty$, as happened along the sublocus $\{x = \frac{1}{4}\}$ in the degree $8'$ case? Indeed, examining the discriminant of the quartic equation determining $t$, one sees that such fibres occur if and only if $256y + 27x^2 = 0$. Imposing this condition, we obtain a rational elliptic surface with two fibres of type $\mathrm{I}_1$ and one fibre of type $\mathrm{II}$. But a type $\mathrm{II}$ fibre is irreducible, so does not give an enhancement in the lattice polarization.

\subsection{A compatibility conjecture} \label{sec:GSconjecture}

Based on computations like these, we conclude this paper with a conjecture that relates the notion of mirror symmetry introduced in Section \ref{sec:mirror} to the Gross-Siebert program.

We begin with some setup. Let $(X,C,\mu)$ denote a marked $L$-polarized weak del Pezzo pair of degree $d$, with $L = f_d^{\perp}$. Let $P$ denote the effective cone of $X$. Then  by \cite[Proposition 6.2]{dhmq}, $P$ is rational polyhedral and generated by a finite set of rays; let $r_1,\ldots,r_n$ denote generators of these rays, for some $n \in \bN$.


Let $\{\alpha_1,\ldots,\alpha_k\}$ denote the set of simple roots in $\Phi_L^+$. By Proposition \ref{prop:dPKahler}, each face $F_i$ of $\Nef(X)$ with $[C] \subset F_i$ is orthogonal to $\mu^{-1}(\alpha_i)$, for some $i \in \{1,\ldots,k\}$. Thus, by duality, each class $\mu^{-1}(\alpha_i)$ generates one of the boundary rays $r_j$ in the effective cone. Without loss of generality, assume that $\mu^{-1}(\alpha_i) = r_i$, so that $[C]$ is orthogonal to the face of the effective cone generated by $\{r_1,\ldots,r_k\}$.

In this setting, the computation of Section \ref{sec:8'} constructs a mirror family $\calY$ over $\Spec\bC[P]$. Let $x_i$ denote the variable in $\bC[P]$ corresponding to $r_i$. Based on the the mirror correspondence postulated in Section \ref{sec:complexkahler} and the computations of Sections \ref{sec:8'}  and \ref{sec:8}, one might expect there to exist codimension $1$ subfamilies $\calY_i := \{x_i = c_i\}$ of $\calY$, for $c_i$ constants and $i \in \{1,\ldots,k\}$, so that $\calY_i$ corresponds to the codimension $1$ stratum $U_{d,\langle \alpha_i \rangle} \subset \calP_{d,0}$ (here we have assumed that an embedding $L = f_d^{\perp} \hookrightarrow F_d^{\perp}$ has been chosen, so $\langle \alpha_i \rangle \subset L$ can be identified with an overlattice of $\check{L} = 0 \subset F_d^{\perp}$).

Unfortunately this is not quite the case. The mirror correspondence in Section \ref{sec:complexkahler} is fundamentally local in nature; when we attempt to extend it to the constructed mirror family $\calY$ we find that we cannot, in general, make a global choice of marking, so we cannot directly identify subfamilies of $\calY$ with strata in the (marked) period domain $\calP_{d,0}$. Instead, we have to compensate for the possible action of monodromy on the markings.

By \cite[Lemma 4.3]{msacc}, this monodromy group is contained in the group $\mathrm{Adm}_d$ of \emph{$d$-admissible automorphisms} of $\mathrm{I}_{1,9}$.

\begin{definition}An automorphism of $\mathrm{I}_{1,9}$ is \emph{$d$-admissible} if it fixes the sublattice $F_d$ and preserves the cone $\calC_d^{++}$.
\end{definition}

\begin{remark} The results of \cite[Section 6]{msacc} show that the stack quotient $\calP_{d,0}/\mathrm{Adm}_d$ is a period domain for \emph{unmarked} rational elliptic surfaces of degree $d$, and the corresponding period mapping is an isomorphism over an open set in this space.
\end{remark}

The group $\mathrm{Adm}_d$ acts on the set $\{\alpha_1,\ldots,\alpha_k\}$ of simple roots in $F_d^{\perp}$. For each $\alpha_i \in \{\alpha_1,\ldots,\alpha_k\}$, define $\calA_d(\alpha_i)$ to be the set of $\alpha_j \in \{\alpha_1,\ldots,\alpha_k\}$ such that $\alpha_j = \varphi(\alpha_i)$ for some $\varphi \in \mathrm{Adm}_d$. The set $\calA_d(\alpha_i)$ should be thought of as those simple roots $\alpha_j \in \Phi^+_L$ that are equivalent to $\alpha_i$ up to the choice of marking. Similarly, define $\calA_d(r_i)$ to be $\mu^{-1}(\calA_d(\alpha_i))$ and $\calA_d(x_i)$ to be the set of variables in $\bC[P]$ corresponding to $\calA_d(r_i)$.

Then we have the following conjecture, which should be thought of as a formalization of the discussion in Section \ref{sec:complexkahler}.

\begin{conjecture}\label{conj:GSconj} For each face $F_i$ of $\Nef(X)$ with $[C] \subset F_i$, there exists a codimension $1$ subfamily $\calY_i$ of the mirror family $\calY$ \textup{(}constructed using the method of Section \ref{sec:8'}\textup{)} whose members are $\langle \alpha_i \rangle$-polarized rational elliptic surfaces of type $d$. Explicitly, $\calY_i$ is given by a subfamily $f_i(\calA_d(x_i)) = 0$, where $f_i(A_d(x_i))$ is a polynomial in the variables from the set $\calA_d(x_i)$.

Moreover, if $S = \bigcap_{i \in I} F_i$ is a subface of $\Nef(X)$, then the subfamily $\calY_S := \bigcap_{i \in I} \calY_i$ is a family of $\langle \alpha_i : i \in I \rangle$-polarized rational elliptic surfaces of type $d$.
\end{conjecture}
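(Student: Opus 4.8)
The plan is to route the entire argument through the period map, reducing both assertions to the stratification \eqref{eq:strata} of the period domain. Each member of the mirror family $\calY \to \Spec\bC[P]$ is a rational elliptic surface of type $d$, so the corrected and compactified construction of Section \ref{sec:8'} produces a rational map $\Psi\colon \Spec\bC[P] \to \calP_{d,0}/\mathrm{Adm}_d$, defined on the dense open set away from the non-normal loci (the analogues of $\{x=0\}$ and $\{y=0\}$ in the degree $8'$ computation). The subfamily $\calY_i$ should then be characterised intrinsically as the preimage under $\Psi$ of the image of the codimension $1$ stratum $U_{d,\langle\alpha_i\rangle}$; by \eqref{eq:strata} this image is exactly the locus where a general member acquires an extra effective root in the class $\alpha_i$, i.e. where two $\mathrm{I}_1$ fibres collide to produce a reducible fibre. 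The heuristic made explicit at the end of Section \ref{sec:complexkahler} --- that the variable $x_i$ is the one attached to the root class $r_i = \mu^{-1}(\alpha_i)$ --- is what should pin the defining equation to the correct variables.

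To identify that defining equation I would compute the discriminant of the elliptic fibration on a general member of $\calY$, exactly as in the degree $8$ and $8'$ computations, and track its factorisation by the monoid grading of $\bC[P]$. Each factor whose vanishing forces a reducible fibre of class $\alpha_i$ cuts out a candidate for $\calY_i$. The essential subtlety, already anticipated in the discussion preceding the conjecture, is that $\calY$ carries no global marking: the construction only sees the orbit of $\alpha_i$ under the monodromy group $\mathrm{Adm}_d$, so any intrinsically defined sublocus must be invariant under the induced action on the variables. This invariance is what forces the defining equation to take the form $f_i(\calA_d(x_i)) = 0$ with $f_i$ a polynomial in precisely the orbit variables $\calA_d(x_i)$, recovering the simple form $\{x_i = c_i\}$ in the degree $8'$ case where the orbit is a singleton.

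For the final assertion I would exploit the compatibility of the stratification with intersections. If $S = \bigcap_{i\in I} F_i$ is a subface of $\Nef(X)$, the corresponding simple roots $\{\alpha_i : i \in I\}$ lie on a common face of $\Phi_L^+$ and so generate a negative definite overlattice $L' = \langle \alpha_i : i\in I\rangle$ obtained from $\check L = 0$ by adjoining roots from $\Psi_{d,0}$. By \eqref{eq:strata} the stratum $U_{d,L'}$ is the common face in the closures of the $U_{d,\langle\alpha_i\rangle}$ and equals their intersection; pulling back through $\Psi$ gives $\calY_S = \bigcap_{i\in I}\calY_i$ as the preimage of $U_{d,L'}$, hence a family of $L'$-polarized type $d$ surfaces. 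The point requiring verification is that this intersection is nonempty of the expected codimension $|I|$, which reduces to the independence of the enhancements by distinct $\alpha_i$; since the simple roots in a face are linearly independent, they should impose independent conditions $\phi(\alpha_i)=1$ on the period point, making transversality plausible.

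The hard part will be making $\Psi$ and its compatibility with the monoid grading rigorous, and this splits into two genuine difficulties. First, as flagged in Remark \ref{rem:GSnonrigorous}, $\calY$ is not produced by a bona fide Gross-Siebert toric degeneration but by the expedient $C \leadsto C'$ together with an ad hoc compactification; one must therefore first prove that this correction faithfully reproduces the period behaviour, i.e. that the discriminant genuinely tracks the lattice enhancements predicted by \eqref{eq:strata} rather than some artefact of the degeneration. Second, showing that $f_i$ involves \emph{exactly} the orbit variables $\calA_d(x_i)$ --- and no others --- demands a degree-independent understanding of how the monoid grading interacts with $\mathrm{Adm}_d$; the explicit computations of Sections \ref{sec:8'}--\ref{sec:8} make this transparent case by case, but a uniform argument appears to need substantially more structural input, which is presumably why the statement remains a conjecture.
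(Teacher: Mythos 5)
The statement you are asked to prove is Conjecture \ref{conj:GSconj}, and the paper contains no proof of it: the authors support it only by the heuristic correspondence of Section \ref{sec:complexkahler}, the explicit computations of Sections \ref{sec:8'} and \ref{sec:8}, a worked degree $6$ example, and the remark that they have verified the statement case by case for $d \in \{6,7,8,8',9\}$. Your proposal follows essentially the same route as that evidence --- a period map to $\calP_{d,0}/\mathrm{Adm}_d$, the stratification \eqref{eq:strata}, discriminant computations, and $\mathrm{Adm}_d$-orbits to account for monodromy --- so as a research program it is faithful to the paper's intent. But it is a program, not a proof, and the points you defer are exactly the content of the conjecture, so the proposal must be judged as having genuine gaps.

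Two of these gaps deserve to be named more sharply than you do. First, your key step --- ``any intrinsically defined sublocus must be invariant under the induced action on the variables,'' which is what ``forces'' $f_i$ to involve precisely the orbit variables $\calA_d(x_i)$ --- is circular as stated: $\mathrm{Adm}_d$ acts on $\mathrm{I}_{1,9}$, i.e.\ on markings of the rational elliptic surfaces on the mirror side, and there is no defined action of $\mathrm{Adm}_d$ on the coordinate ring $\bC[P]$ of the base. The passage from the orbit $\calA_d(\alpha_i)$ of roots to the orbit $\calA_d(x_i)$ of variables goes through the identification $\mu^{-1}(\alpha_i) = r_i$ \emph{combined with} the mirror correspondence between faces of $\Nef(X)$ and strata of the period domain --- which is precisely what the conjecture asserts, so it cannot be an input to its proof. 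Second, your proposed identification of $\calY_i$ with a discriminant factor is too coarse: vanishing of the discriminant detects collision of singular fibres, not lattice enhancement, and the paper's own degree $8$ computation shows that colliding $\mathrm{I}_1$ fibres can produce an irreducible type $\mathrm{II}$ fibre (along $256y + 27x^2 = 0$), which enhances nothing. Any rigorous argument must therefore distinguish $\mathrm{I}_2$-type degenerations from cuspidal ones inside the discriminant locus, uniformly in $d$. Together with the unproven flatness/simultaneous-resolution issues in constructing $\Psi$ (which you do flag) and the transversality of the intersections $\calY_S$ (asserted only as ``plausible''), these leave the statement exactly where the paper leaves it: a conjecture.
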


\begin{remark} We note that the subfamilies $\calY_i$ will not, in general, all be distinct. Indeed, we should have $\calY_i = \calY_j$ if and only if $\calA_d(\alpha_i) = \calA_d(\alpha_j)$. In this case, one should think of the two faces $F_i$ and $F_j$ of $\Nef(X)$ as corresponding to two different markings on the surfaces in the subfamily $\calY_i$.

We also note that if $\calY_i = \calY_j$, then an intersection $\calY_i \cap \calY_j$ should be interpreted as a singular locus in $\calY_i$ of codimension $1$.
\end{remark}

\begin{remark} The authors have verified this conjecture explicitly for degrees $d \in \{6,7,8,8',9\}$.
\end{remark}

\begin{remark}One might hope that a similar conjecture would hold to describe the structure of the Gross-Siebert mirror of $(X,C)$ (see Remark \ref{rem:GSnonrigorous}). However, the difficulty of explicitly computing Gross-Siebert mirrors means that the authors have been unable to verify this in even the simplest cases.
\end{remark}

\begin{example} This conjecture is best illustrated by an example. Consider a marked $L$-polarized weak del Pezzo pair $(X,C,\mu)$ of degree $6$, with $L = f_6^{\perp} \cong A_2 + A_1$. The lattice $L$ contains three simple roots $\alpha_1,\alpha_2,\alpha_3 \in \Phi^+_L$, and the effective cone $P$ of $X$ is generated by the $\alpha_i$ and an additional class $E$, with intersection matrix
\[\begin{array}{c|cccc} & E & \alpha_1 & \alpha_2 & \alpha_3 \\
\hline
E & -1 & 1 & 1 & 0 \\
\alpha_1 & 1 & -2 & 0 & 0 \\
\alpha_2 & 1 & 0 & -2 & 1 \\
\alpha_3 & 0 & 0 & 1 & -2  
\end{array}
\]

By Proposition \ref{prop:dPKahler}, in a neighbourhood of the ray generated by $[C] = [-K_X]$, the codimension $1$ faces of $\Nef(X)$ are given by the three hyperplanes $F_i := (\mu^{-1}(\alpha_i))^{\perp}$. 

Identify $\bC[P]$ with $\bC[x,y,z,t]$ by identifying the class $k\alpha_1 + l\alpha_2 + m \alpha_3 + n E$ with $x^ky^lz^mt^n$. The construction of Section \ref{sec:8'} gives rise to a family $\calY$ over $\Spec \bC[x,y,z,t]$, whose generic member is an (unpolarized) rational elliptic surface of degree $6$. Then we find the following correspondence between subfamilies and the structure of $\Nef(X)$.
\begin{itemize}
\item The locus $\{x = \frac{1}{4}\}$ gives rise to a subfamily $\calY_1$ of rational elliptic surfaces with fibre type $\mathrm{I}_6\mathrm{I}_2\mathrm{I}_1\mathrm{I}_1\mathrm{I}_1\mathrm{I}_1$. These admit a polarization by the lattice $\langle \alpha_1 \rangle \cong A_1$. Noting that $\calA_6(\alpha_1) = \{\alpha_1\}$, we see that $\calA_6(x) = \{x\}$, so $f_1(\calA_6(x)) = x - \frac{1}{4}$. In the sense of Conjecture \ref{conj:GSconj}, this subfamily is mirror to the face $F_1$.

\item The locus $\{27y^2z^2 - 18yz + 4y + 4z - 1 = 0\}$ gives rise to a subfamily $\calY_2$ of rational elliptic surfaces, which also have fibre type $\mathrm{I}_6\mathrm{I}_2\mathrm{I}_1\mathrm{I}_1\mathrm{I}_1\mathrm{I}_1$. These admit a polarization by the lattice $\langle \alpha_2 \rangle \cong A_1$. Observing that $\calA_6(\alpha_2) = \calA_6(\alpha_3) = \{\alpha_2,\alpha_3\}$, we find that $\calA_6(y) = \calA_6(z) = \{y,z\}$, so $f_2(\calA_6(y)) = f_3(\calA_6(z)) = 27y^2z^2 - 18yz + 4y + 4z - 1$.  In the sense of Conjecture \ref{conj:GSconj}, this subfamily is mirror to the faces $F_2$ and $F_3$.

\item The sublocus $\{x - \frac{1}{4} = 27y^2z^2 - 18yz + 4y + 4z - 1 = 0\}$, corresponding to the intersection $\calY_1 \cap \calY_2$, gives rise to a subfamily $\calY_{12}$ of rational elliptic surfaces, which have fibre type $\mathrm{I}_6\mathrm{I}_2\mathrm{I}_2\mathrm{I}_1\mathrm{I}_1$. These admit a polarization by the lattice $\langle \alpha_1,\alpha_2 \rangle \cong A_1+A_1$.  In the sense of Conjecture \ref{conj:GSconj}, this subfamily is mirror to the faces $F_1 \cap F_2$ and $F_1 \cap F_3$.

\item The locus $\{27y^2z^2 - 18yz + 4y + 4z - 1 = 0\}$ is singular in codimension $1$, along the sublocus $\{y = z = \frac{1}{3}\}$. Over this sublocus, we have a subfamily $\calY_{22}$ of rational elliptic surfaces which have fibre type $\mathrm{I}_6\mathrm{I}_3\mathrm{I}_1\mathrm{I}_1\mathrm{I}_1$. These admit a polarization by the lattice $\langle \alpha_2,\alpha_3 \rangle \cong A_2$. In the sense of Conjecture \ref{conj:GSconj}, this subfamily is mirror to the face $F_2 \cap F_3$.

\item Finally, we have the sublocus $\{x - \frac{1}{4} = y- \frac{1}{3} = z-\frac{1}{3} = 0\}$.  Over this sublocus we have a subfamily $\calY_{122}$ of rational elliptic surfaces which have fibre type $\mathrm{I}_6\mathrm{I}_3\mathrm{I}_2\mathrm{I}_1$. These admit a polarization by the lattice $L = \langle \alpha_1, \alpha_2,\alpha_3 \rangle \cong A_2 + A_1$. In the sense of Conjecture \ref{conj:GSconj}, this subfamily is mirror to the face $F_1 \cap F_2 \cap F_3$, which is the ray generated by $[C]$.
\end{itemize}
\end{example}

\end{document}